\documentclass{amsart}
\usepackage{amsthm, amssymb, amsfonts, amscd}
\usepackage{graphicx}
\usepackage{stmaryrd} 
\usepackage{tikz}
\usepackage{tikz-cd}
\usepackage{bbold}
\usetikzlibrary{arrows}
\usepackage{verbatim}
\usepackage[all,arc]{xy}
\usepackage{enumerate}
\usepackage{mathrsfs, mathabx}
\usepackage{xcolor}
\usepackage{hyperref}
\usepackage{soul}
\usepackage{comment}
\usepackage{esint} 
\usepackage[margin=1.0in]{geometry}
\usepackage[
open, openlevel=2, atend
]{bookmark}[2011/12/02]
\bookmarksetup{color=blue}

\theoremstyle{definition} 
\newtheorem{thm}{Theorem}[section]
\newtheorem{cor}[thm]{Corollary}
\newtheorem{prop}[thm]{Proposition}
\newtheorem{lem}[thm]{Lemma}
\newtheorem{conj}[thm]{Conjecture}

\theoremstyle{definition}
\newtheorem{defn}[thm]{Definition}

\newtheorem{rem}[thm]{Remark}

\newcommand{\mr}{\mathrm}
\newcommand{\mf}{\mathfrak}

\newcommand{\mc}{\mathcal}

\newcommand{\wt}{\widetilde}

\newcommand{\Aut}{\mr{Aut}}

\newcommand{\bb}{\mathbb}

\newcommand{\Z}{\bb{Z}}

\newcommand{\bP}{\bb{P}}

\newcommand{\bF}{\bb{F}}
\newcommand{\bE}{\bb{E}}
\newcommand{\fp}{\bF_p}
\newcommand{\fq}{\bF_q}
\newcommand{\zp}{\Z_p}

\newcommand{\Sur}{\mr{Sur}}

\newcommand{\Haar}{\mr{Haar}}

\newcommand{\M}{\mr{M}}

\newcommand{\sg}{\sigma}
\newcommand{\Sg}{\Sigma}

\newcommand{\cok}{\mr{cok}}

\newcommand{\cl}{\mr{cl}}
\newcommand{\CL}{\mr{CL}}

\newcommand{\ol}{\overline}

\newcommand{\lt}{\left}
\newcommand{\rt}{\right}

\newcommand{\be}{\begin{enumerate}}
\newcommand{\ee}{\end{enumerate}}

\newcommand{\bi}{\begin{itemize}}
\newcommand{\ei}{\end{itemize}}

\newcommand{\bbm}{\begin{bmatrix}}
\newcommand{\ebm}{\end{bmatrix}}

\newcommand{\lf}{\lfloor}
\newcommand{\rf}{\rfloor}

\numberwithin{equation}{section}
\allowdisplaybreaks

\title[Random $p$-adic matrices with prescribed zero patterns]{A mod $p$ determinant criterion for Cohen--Lenstra convergence of random $p$-adic matrices with prescribed zero patterns}

\author{Hyungmin Jang}
\address[H. Jang]{Department of Mathematics, Yonsei University, Seoul 03722, Republic of Korea}
\email{io1278@yonsei.ac.kr}

\author{Nathan Kaplan}
\address[N. Kaplan]{Department of Mathematics, University of California, Irvine, CA 92697, USA}
\email{nckaplan@math.uci.edu}

\author{Jungin Lee}
\address[J. Lee]{Department of Mathematics, Ajou University, Suwon 16499, Republic of Korea}
\email{jileemath@ajou.ac.kr}

\author{Myungjun Yu}
\address[M. Yu]{Department of Mathematics, Yonsei University, Seoul 03722, Republic of Korea}
\email{mjyu@yonsei.ac.kr}

\begin{document}

\begin{abstract}
We study the distribution of cokernels of Haar-random matrices over the $p$-adic integers with prescribed zero patterns, motivated by the Cohen--Lenstra heuristics.
A central feature of our approach is that the asymptotic cokernel distribution is governed by the reductions modulo $p$ of these matrices, viewed as random matrices over the finite field $\mathbb{F}_p$.
For several families of support patterns arising from stair-shaped zero regions, including general stair-shaped patterns, band matrices, and matrices with two symmetric stair-shaped zero regions, we show that convergence of the cokernel distribution to the Cohen--Lenstra distribution is equivalent to an asymptotic nonsingularity condition over $\mathbb{F}_p$.
We further propose a conjecture for general support patterns and give examples showing that analogous rank-$r$ criteria fail for $r\ge 1$.
\end{abstract}

\maketitle

\section{Introduction}

Let $p$ be a prime number and let $Y_\CL$ be the random finite abelian $p$-group whose probability distribution is given by
\begin{equation}
\label{eq: CL distribution}
\bP(Y_\CL \cong G) = \frac{1}{\#\Aut(G)}\prod_{i=1}^{\infty} (1-p^{-i}) =: \CL(G)
\end{equation}
for every finite abelian $p$-group $G$. The distribution of $Y_{\CL}$, referred to as the \emph{Cohen--Lenstra distribution}, arises naturally in many areas of mathematics. 

As the name suggests, this distribution first appeared in the work of Cohen and Lenstra \cite{CL84}, who proposed a conjecture concerning the distribution of the Sylow $p$-subgroups of the class groups of imaginary quadratic fields. 
Motivated by the function field analogue of the Cohen--Lenstra conjecture, Friedman and Washington \cite{FW89} studied the distribution of random matrices over the ring of $p$-adic integers $\zp$. They proved that if $X_{\Haar, n}$ is a random $n \times n$ matrix over $\zp$ whose entries are independent and distributed according to the Haar measure, then the distribution of the cokernel of $X_{\Haar, n}$ (denoted by $\cok(X_{\Haar, n})$) converges to the Cohen--Lenstra distribution, i.e.,
$$
\underset{n \to \infty}{\lim} \bP(\cok(X_{\Haar, n}) \cong G) = \CL(G)
$$
for every finite abelian $p$-group $G$. 

The Cohen--Lenstra distribution also appears in combinatorial contexts. Kahle and Newman \cite{KN22} conjectured that if $C_n$ is 
a random $2$-dimensional determinantal hypertree, the distribution of the Sylow $p$-subgroup of the first homology group $H_1(C_n)$ follows the Cohen--Lenstra distribution. 
We refer the reader to \cite{LY25, Mes23, Mes24a, Mes25} for recent progress in this direction.

By reducing \eqref{eq: CL distribution} modulo $p$, we have 
\begin{equation}
\label{eq: CL mod p distribution}
\bP\lt( Y_\CL/pY_{\CL} \cong \fp^k \rt) = p^{-k^2}\prod_{i=1}^{k} (1-p^{-i})^{-2}\prod_{i=1}^\infty (1-p^{-i}) =: \cl(k)
\end{equation}
for every nonnegative integer $k$ (see \cite[Theorem 6.3]{CL84}). Here, $\fp$ denotes the finite field with $p$ elements. In particular, we have
$$
\cl(m) = \sum_{G: ~\mr{rk}_p(G) = m} \CL(G),
$$
where the sum varies over all finite abelian $p$-groups $G$ whose $p$-rank $\mr{rk}_p(G) := \dim_{\fp} G/pG$ is equal to $m$.

As a generalization of the work of Friedman and Washington, Kang, Lee, and Yu \cite{KLY24} studied random $p$-adic matrices with fixed zero entries. In particular, they investigated when the distribution of the cokernel of an $n \times n$ matrix over $\zp$ converges to the Cohen--Lenstra distribution as $n \to \infty$, in a setting where some entries are fixed to be $0$ and the remaining entries are independent and Haar-random in $\zp$. In \cite{Woo19}, Wood proved strong universality theorems for cokernels of random $n \times n$ matrices over $\zp$.  These theorems require that each entry of the random matrix is not too concentrated on any value modulo $p$, which excludes matrices where some entries are fixed to be $0$.  This provides motivation to study cokernels of random matrices where certain entries are fixed, seeking to establish universality phenomena for a broader class of random matrix models.  Further interest in this direction comes from applications to the study of random graphs, building on Wood's breakthrough results about sandpile groups of Erdős--Rényi random graphs \cite{Woo17}.  As discussed in \cite[Section 1.2]{KLY24}, if one considers a random graph on $n$ vertices in which some edges can never exist and all other edges occur independently with probability $q \in (0,1)$, then its sandpile group can be represented as the cokernel of a random symmetric matrix whose prescribed zero entries reflect the forbidden edges.

Kang, Lee, and Yu established a general lower bound on the number of random entries (those not fixed to be $0$) required for such convergence \cite[Theorem 4.1]{KLY24}. Furthermore, when the fixed zero entries form a stair-shaped pattern, they provided a sufficient condition ensuring convergence to the Cohen--Lenstra distribution \cite[Theorems 3.3 and 3.4]{KLY24}. In this paper, we prove that these conditions are not only sufficient but also necessary (see Corollaries \ref{cor: stair d=1} and \ref{cor: stair d>1}).

\begin{defn}
Let $(Y_n)_{n=1}^{\infty}$ be a sequence of random finitely generated $\zp$-modules. We say that \emph{$Y_n$ converges to CL} (denoted by $Y_n \rightrightarrows \CL$) if the distribution of $Y_n$ converges to the Cohen--Lenstra distribution as $n \to \infty$, i.e., for every finite abelian $p$-group $G$,
$$
\lim_{n\to\infty}\bP(Y_n \cong G) = \CL(G).
$$
Let $m$ be a nonnegative integer. We say that \emph{$Y_n$ converges to CL in rank $m$} if 
$$
\underset{n \to \infty}{\lim}\bP(Y_n/pY_n \cong \fp^m) = \cl(m). 
$$
\end{defn}

It is clear that if $Y_n$ converges to CL, then $Y_n$ converges to CL in rank $m$ for all nonnegative integers $m$. However, the converse does not hold. For example, let $Y = Y_1 = Y_2 = \cdots$ be a random finite abelian $p$-group given by 
$$
\bP(Y \cong (\Z/p\Z)^m) = \cl(m)
$$
for every $m \ge 0$, and
$$
\bP(Y \cong H) = 0
$$
for all non-elementary $p$-groups $H$ (i.e., $pH \ne 0$). It is clear that $Y_n$ does not converge to CL, whereas $Y_n$ converges to CL in rank $m$ for all $m\ge 0$.

In the case of interest---namely, the distribution of the cokernel of a Haar-random matrix with some entries fixed to $0$---we observe that the converse often holds. Even more remarkably, assuming convergence to CL in rank $0$ already suffices to deduce convergence to CL in many cases.
As an example, we prove the following. See Section \ref{Sub21} for notation and terminology.

\begin{thm}[Theorem \ref{thm: stair main theorem}]
\label{thm: intro thm 1}
Assume that for every positive integer $n$,
$$\sg_{n,1} \subseteq \sg_{n,2}\subseteq \cdots \subseteq \sg_{n,n} \subseteq [n].
$$ 
Let $X_n \in \M_n(\zp)$ be the Haar-random matrix over $\zp$ supported on $\Sg_n = (\sg_{n,1}, \sg_{n,2}, \ldots, \sg_{n,n})$. Then 
$$
\cok(X_n) \rightrightarrows \CL~\Longleftrightarrow~ \cok(X_n) ~\text{converges to}~\CL~\text{in rank $0$}. 
$$
\end{thm}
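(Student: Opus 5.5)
The forward implication is immediate: if $\cok(X_n) \rightrightarrows \CL$, then summing over all $G$ with $\mr{rk}_p(G)=0$ (only $G=0$ qualifies) gives $\bP(\cok(X_n)=0)\to \CL(0)=\cl(0)$. All the work is in the converse. Here the plan is to pass through the known sufficient criterion of Kang--Lee--Yu for stair-shaped patterns \cite[Theorems 3.3 and 3.4]{KLY24}. That criterion expresses convergence to CL in terms of a growth condition on the row supports $\sg_{n,i}$: roughly, the sizes $|\sg_{n,i}|$ (or the number of "small" rows) must grow suitably. We aim to show that failure of this growth condition forces $\liminf_n \bP(\det \ol{X_n}\neq 0) < \cl(0)=\prod_{i\ge1}(1-p^{-i})$, where $\ol{X_n}$ is the reduction mod $p$. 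This is exactly the statement that $\cok(X_n)$ does not converge to CL in rank $0$.

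The key computation is the probability that a stair-shaped random matrix over $\fp$ is nonsingular. Because the supports are nested, we expose the rows in order $i=1,\dots,n$. Row $i$ is uniform on the coordinate subspace $\fp^{\sg_{n,i}}$. The rows $1,\dots,i-1$ span a subspace $V_{i-1}$ contained in $\fp^{\sg_{n,i-1}}\subseteq \fp^{\sg_{n,i}}$. Conditioned on the previous rows being independent, row $i$ avoids $V_{i-1}$ with probability exactly $1-p^{(i-1)-|\sg_{n,i}|}$. This gives
\[
\bP(\det \ol{X_n}\neq 0)=\prod_{i=1}^{n}\bigl(1-p^{\,i-1-|\sg_{n,i}|}\bigr),
\]
which vanishes unless $|\sg_{n,i}|\ge i$ for all $i$. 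Setting $d_{n,i}:=|\sg_{n,i}|-i+1\ge 1$, this becomes $\prod_i (1-p^{-d_{n,i}})$.

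Next we compare this product with $\prod_{k\ge1}(1-p^{-k})$. Since the supports are nested and strictly increasing where needed, and $|\sg_{n,n}|\le n$, the sequence $d_{n,i}$ is constrained: $d_{n,n}\le 1$, and $d_{n,i}\le d_{n,i+1}+1$ for each $i$. A rearrangement/majorization argument then shows that for each $k$, the number of indices with $d_{n,i}\le k$ is at least the number in the "ideal" sequence $1,2,3,\dots$ read backwards from $i=n$. Consequently $\prod_i(1-p^{-d_{n,i}})\le \prod_{k\ge1}(1-p^{-k})$, with equality in the limit only if, for each fixed $k$, asymptotically exactly one index has $d_{n,i}=k$ and the tail contributes negligibly. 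That is, rank-$0$ convergence forces $d_{n,n-j}=j+1$ for all fixed $j$ eventually, together with $\sum_{i}p^{-d_{n,i}}$ converging to $\sum_k p^{-k}$.

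The final step, which I expect to be the main obstacle, is translating this asymptotic shape condition into the exact hypotheses of \cite[Theorems 3.3, 3.4]{KLY24}. Those hypotheses are phrased differently (the $d=1$ and $d>1$ cases), so some bookkeeping is required. In particular, we must check that the tail condition (no unbounded collection of rows with bounded excess) matches their condition. If it does not match directly, the fallback is the moment method: compute $\bE\#\Sur(\cok X_n,G)$ by the same row-exposure conditioning over $\Z/p^e$, reducing again to the quantities $d_{n,i}$, and show these moments tend to $1$ under the derived shape condition.
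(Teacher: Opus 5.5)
Your forward direction and your necessity analysis are correct, and they match the paper's (1)$\Rightarrow$(2) in Theorem \ref{thm: stair main theorem}. The exposure formula is the paper's (2.2). You treat $\sg_{n,i}$ as row supports, whereas the paper's are column supports; this is harmless by transposition. The termwise bound $d_{n,n-j}\le j+1$ already gives $\le c_p(n)$, so no majorization is needed. Rank-$0$ convergence then forces the block of full-support lines to grow without bound while $\sum p^{-d_{n,i}}$ over the other lines tends to $0$. Since $d=v(n)_a-h(n)_a+1$ at the last line of step $a$, this yields the paper's condition $\sum_{a=1}^{\ell_n}p^{-(v(n)_a-h(n)_a)}\to 0$.

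The gap is the other half, which is the substance of the theorem: showing that this shape condition implies $\cok(X_n)\rightrightarrows\CL$. \cite[Theorems 3.3, 3.4]{KLY24} do not supply it. They treat only uniform stairs with steps of height $1$ and width $d$, i.e. the families of Corollaries \ref{cor: stair d=1} and \ref{cor: stair d>1}, not arbitrary nested supports. Neither bookkeeping nor monotonicity (Lemma \ref{lem: Prop 3.2 in KLY24}) bridges this. For example, take a single step with $h(n)_1=\lfloor n/2\rfloor$ and $v(n)_1-h(n)_1\to\infty$ much more slowly than $\log_p n$. It satisfies the condition, but it contains no pattern from those families that meets their hypotheses, even after transposing.

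So you must prove moment convergence directly, and your fallback only names the method. Row-by-row exposure is also the wrong mechanism: it controls mod-$p$ rank events, not $\bE\#\Sur(\cok X_n,G)$. The missing estimate goes as follows.
\begin{enumerate}
\item By Lemma \ref{lem: d moment error term}, it suffices to show $\sum_{(G_1,\ldots,G_n)\neq(G,\ldots,G)}d_{G_1,\ldots,G_n}\to0$.
\item Because the supports are nested, the images $FV_{\sg_{n,i}}$ form a chain with at most $m$ strict jumps, where $|G|=p^m$. Index the terms by the chain $H_1\lneq\cdots\lneq H_{r+1}=G$ and the steps $i_1<\cdots<i_r$ where the jumps occur.
\item Basis vectors in $[v(n)_{i_a}]\setminus[v(n)_{i_{a-1}}]$ must map into $H_a$. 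Hence there are at most $|H_1|^{v(n)_{i_1}}\prod_{a=2}^{r}|H_a|^{v(n)_{i_a}-v(n)_{i_{a-1}}}|G|^{n-v(n)_{i_r}}$ such $F$.
\item Dividing by the denominator and summing by parts bounds each term by $p^{-\sum_a (v(n)_{i_a}-h(n)_{i_a})}$.
\item Summing over chains and index tuples bounds the total error by $O_G(1)\sum_{r=1}^{m}\bigl(\sum_{a}p^{-(v(n)_a-h(n)_a)}\bigr)^r$, which tends to $0$. Theorem \ref{thm: Wood robust uniqueness theorem} then finishes.
\end{enumerate}
Without this or an equivalent argument, your proposal shows only that rank-$0$ convergence forces the shape condition, not the theorem.
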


For a matrix $A_n \in \M_n(\zp)$, let $\ol{A_n} \in \M_n(\fp)$ denote the reduction of $A_n$ modulo $p$. Then
$$
\cok(A_n) = 0 ~\Longleftrightarrow ~\cok(\ol{A_n}) = 0 ~\Longleftrightarrow ~ \det(\ol{A_n}) \ne 0,
$$
where the first equivalence follows from the fact that
$$
\cok(\ol{A_n}) \cong \cok(A_n) \otimes \fp. 
$$
This implies that the condition on the right side of Theorem \ref{thm: intro thm 1} is equivalent to the requirement that
$$
\underset{n \to \infty}{\lim}\bP(\det(\ol{X_n}) \ne 0) = \cl(0) = \prod_{i=1}^\infty (1-p^{-i}).
$$
Therefore, Theorem \ref{thm: intro thm 1} reduces the problem of proving convergence to CL to a much weaker condition on the limiting probability that the reduction of $X_n$ modulo $p$ is nonsingular.

We further show that the same conclusion holds under various patterns of random entries of $X_n$. 
First, we consider a band matrix. More precisely, let $(t_n)_{n=1}^\infty$ be a sequence of positive integers such that $t_n \le n$ for all $n$, and $\sg_{n,i} = \{j \in [n] : |j-i| \le t_n\}$. The Haar-random matrix $X_n \in \M_n(\zp)$ supported on $\Sg_n = (\sg_{n,1}, \ldots, \sg_{n,n})$ is called the \emph{Haar-random band matrix of width $t_n$}. 
Mészáros investigated the precise conditions under which $\cok(X_n)$ converges to CL, and obtained the following result \cite[Theorem 1]{Mes24b}. Related questions for band matrices have also been studied over the real numbers, especially in connection with localization and delocalization for Gaussian band matrices; see \cite[Section 1.2]{Mes24b}.

\begin{thm}[Mészáros]
\label{thm: Meszaros thm}
Let $X_n \in \M_n(\zp)$ be the Haar-random band matrix of width $t_n$. Then the following are equivalent. 
\begin{enumerate}
    \item $\cok(X_n) \rightrightarrows \CL$,
    \item $\underset{n \to \infty}{\lim} (t_n - \log_p n) = \infty$.
\end{enumerate}
\end{thm}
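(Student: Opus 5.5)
The plan is to split the equivalence into two parts, in line with the philosophy of Theorem \ref{thm: intro thm 1}. \textbf{(A)} For Haar-random band matrices, $\cok(X_n) \rightrightarrows \CL$ holds if and only if $\cok(X_n)$ converges to CL in rank $0$, that is,
$$\lim_{n\to\infty}\bP(\det(\ol{X_n})\neq 0)=\prod_{i\ge1}(1-p^{-i}).$$
\textbf{(B)} This mod $p$ nonsingularity limit holds if and only if $t_n-\log_p n\to\infty$. Since convergence to CL trivially implies convergence in rank $0$, the direction (1)$\Rightarrow$(2) needs only the "only if" half of (B). The direction (2)$\Rightarrow$(1) needs the "if" half of (B) together with (A). Alternatively, (2)$\Rightarrow$(1) can be proved directly by the moment method, as described next.

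\textbf{Sufficiency (2)$\Rightarrow$(1), via moments.} Under (2) I would verify $\bE\#\Sur(\cok X_n,G)\to 1$ for every finite abelian $p$-group $G$; moments of this size determine the CL distribution uniquely. Write
$$\bE\#\Sur(\cok X_n,G)=\sum_{F}\bP(FX_n=0),$$
where $F$ ranges over surjections $\zp^n\to G$, i.e. $F=(v_1,\dots,v_n)$ with $v_i\in G$ generating $G$. Since the columns of $X_n$ are independent and column $j$ is Haar on rows $[j-t_n,j+t_n]$, we get
$$\bP(FX_n=0)=\prod_j \#\langle v_i : |i-j|\le t_n\rangle^{-1}.$$
"Generic" $F$, for which every window of length $2t_n+1$ already generates $G$, contribute $(\#G)^{-n}\cdot\#\{F\}\to 1$. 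The non-generic $F$ are grouped by the maximal runs of windows spanning a proper subgroup $H<G$. A run of $L$ consecutive indices whose $v_i$ lie in $H$ costs a factor of roughly $[G:H]^{-(L+2t_n)}$ in probability and gains $[G:H]^{-L}$ fewer choices. So each run position contributes $O(p^{-t_n})$ after normalization, because the adjacent windows that partially overlap the run also lose at least a factor of $[G:H]$. Summing over the $\le n$ positions and over multiple runs gives a geometric series bounded by $\exp(O(n p^{-t_n}))-1\to0$. The key bookkeeping point is to show that windows overlapping a run boundary lose at least one factor of $p$ for about $t_n$ steps, which produces $p^{-t_n}$ rather than $p^{-2t_n}$.

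\textbf{Necessity, via the transfer (elimination) process over $\fp$.} I would analyze $\ol{X_n}$ by Gaussian elimination column by column. Because of the band structure, after processing column $j$ the relevant state is a subspace of $\fp^{2t_n}$ recording the unresolved rows inside the current window. One shows that at each step, conditionally on the past, there is an event of probability at least $c\,p^{-t_n}$ that forces a rank deficiency which later columns cannot repair. Concretely, this event says that the $t_n+1$ new entries entering the window are such that the new column lies in the span of the current active rows. These events are essentially independent across steps spaced $\ge 2t_n+1$ apart. Hence if $t_n-\log_p n\le C$ along a subsequence, the number of such defects has mean $\gtrsim n p^{-t_n}/t_n\cdot t_n\ge c'>0$. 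The rank-$0$ probability is then at most $\cl(0)\cdot(1-\delta)$ in the limit, for some fixed $\delta>0$, so (1) fails.

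\textbf{Main obstacle.} I expect the main difficulty to be making the defect probability precise, namely obtaining the order $p^{-t_n}$ (not $p^{-2t_n}$) together with enough independence to conclude that the limit is strictly smaller than $\cl(0)$, rather than merely showing that singularity has positive probability. My first attempt would be to compare the elimination Markov chain with the chain for full Haar matrices, whose stationary behavior produces exactly $\prod(1-p^{-i})$. I would then show that the band truncation introduces, at each step, an extra absorbing-type defect of probability $\asymp p^{-t_n}$, so that
$$\bP(\det\ol{X_n}\ne0)=\cl(0)\,\exp\bigl(-\Theta(np^{-t_n})\bigr)+o(1).$$
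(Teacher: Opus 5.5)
\paragraph{A gap in each direction.} The paper does not reprove this theorem. It quotes Mészáros for (2)$\Rightarrow$(1) and gives its own short argument only for (1)$\Rightarrow$(2), in the proof of Theorem~\ref{thm: band main theorem} and the remark after it. Measured against that, your proposal has a gap in each direction.

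\paragraph{Necessity.} You have the right defect event: the $i$-th column $x_i$ of $\ol{X_n}$ falls into the span $V_i$ of the earlier columns. Your way of turning it into a limit strictly below $c_p$ does not work, for two reasons.
\begin{enumerate}
\item Keeping only steps spaced $2t_n+1$ apart leaves about $n/(2t_n+1)$ steps. The expected number of defects is then of order $np^{-t_n}/t_n$, which tends to $0$ when $t_n\to\infty$ and $np^{-t_n}$ stays bounded. The factor $t_n$ you multiply back in is unjustified.
\item Even a positive expected number of defects only gives $\bP(\det\ol{X_n}\neq0)\le e^{-c'}$, and this need not lie below $c_p$.
\end{enumerate}
The missing point is that no independence is needed. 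By the chain rule, $\bP(\det\ol{X_n}\neq0)=\prod_i\bP(x_i\notin V_i\mid\dim V_i=i-1)$. The column $x_i$ is uniform on $\langle e_{c(i)},\dots,e_{d(i)}\rangle$, independently of the past. Both this subspace and $V_i$ lie in $\langle e_1,\dots,e_{d(i)}\rangle$, so the defect probability is at least $p^{i-1-d(i)}$ at \emph{every} step. Hence
\[
\bP(\det\ol{X_n}\neq0)\le\Bigl(1-\frac{1}{p^{t_n+1}}\Bigr)^{n-t_n}\prod_{j=1}^{t_n}\Bigl(1-\frac{1}{p^{j}}\Bigr).
\]
Here the last $t_n$ columns, for which $d(i)=n$, supply exactly the Haar-type factor you wanted to compare against. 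If $t_n$ is bounded along a subsequence, the bound tends to $0$. Otherwise the product tends to $c_p$, and rank-$0$ convergence forces $(n-t_n)p^{-t_n-1}\to0$, which gives (2).

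\paragraph{Sufficiency.} Route (A) is circular. The rank-$0$ criterion for band matrices (Theorem~\ref{thm: band main theorem}) is derived in the paper \emph{from} this theorem, and the ``if'' half of (B) is no easier than (2)$\Rightarrow$(1) itself. So everything rests on your moment sketch, and its bookkeeping is off.

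Force your $v_i$ into $H$ on an interval $I$ of length $L$. This divides the number of $F$ by $[G:H]^{L}$, and it \emph{raises} $\bP(FX_n=0)$ by at least $[G:H]^{\#\{j:\,[j-t_n,j+t_n]\cap[n]\subseteq I\}}$. That exponent is $L-2t_n$ for an interior run and $L-t_n$ for a run touching an end of the band. The partially overlapping windows are precisely the ones that fail to compensate the count loss; they do not lose extra factors. The net weight of a run is therefore about $[G:H]^{-2t_n}$ or $[G:H]^{-t_n}$, independent of $L$. There is no geometric decay in $L$, and you must sum over both endpoints of each run. For $G=\Z/p$, single runs contribute on the order of $n^2p^{-2t_n}+np^{-t_n}$. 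This does tend to $0$ under (2), but not for the reason you give.

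For general $G$ there are further sources of extra gain that must be controlled:
\begin{itemize}
\item windows inside a run, or straddling its ends, can span still smaller subgroups;
\item consecutive bad windows can span different subgroups.
\end{itemize}
Controlling these gains is the actual content of Mészáros's proof, and your sketch does not address it. As it stands, (2)$\Rightarrow$(1) is not established.
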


Using Theorem \ref{thm: Meszaros thm}, we prove the following theorem.

\begin{thm}[Theorem \ref{thm: band main theorem}] \label{thm: intro thm 2}
Let $X_n \in \M_n(\zp)$ be the Haar-random band matrix of width $t_n$. Then
$$
\cok(X_n) \rightrightarrows \CL ~\Longleftrightarrow~ \underset{n \to \infty}{\lim}\bP(\det(\ol{X_n}) \ne 0) = \cl(0).
$$
\end{thm}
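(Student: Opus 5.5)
By Theorem \ref{thm: Meszaros thm}, it suffices to show that if $t_n - \log_p n \not\to \infty$, then $\bP(\det(\ol{X_n}) \ne 0)$ does not converge to $\cl(0)$. The forward implication is immediate: convergence to CL forces convergence in rank $0$, which is the right-hand side, since $\cok(X_n)=0$ if and only if $\det(\ol{X_n})\neq 0$. For the converse, suppose $\liminf_n (t_n-\log_p n) < \infty$. Then along a subsequence we have $t_n \le \log_p n + C$ for some constant $C$. Our goal is to show that along this subsequence $\limsup \bP(\det(\ol{X_n})\ne 0) < \cl(0)$.

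The key step is to find many disjoint, nearly independent "obstructions" to nonsingularity mod $p$ in a narrow band matrix. The simplest obstruction is a zero row. Row $i$ of $\ol{X_n}$ has at most $2t_n+1$ random entries, each uniform on $\fp$, so it vanishes with probability at least $p^{-(2t_n+1)} \ge p^{-2C-1} n^{-2}$. That bound is too small to sum to a constant over $n$ rows, so a better obstruction is needed. I would use instead the event that a block of $t_n$ consecutive rows lies in a subspace of smaller dimension.

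Concretely, consider rows $i+1,\dots,i+t_n$. Row $i+t_n$ has its support in columns $[i, i+2t_n]$. A cleaner choice is the event that the $t_n\times t_n$ submatrix $\ol{X_n}[\,i+1..i+t_n,\; i+t_n+1..i+2t_n\,]$ is upper-triangular-like. A second cleaner choice is the event that column $j$ restricted to its band is zero, again with probability $p^{-(2t_n+1)}$. I expect the right obstruction to be a "one-sided" one: the first column has only $t_n+1$ random entries (rows $1,\dots,t_n+1$), so it is zero with probability $p^{-(t_n+1)}\ge p^{-C-1}/n$. This still gives only one such boundary event. So I would instead use the intermediate event $E_i$: the $(t_n+1)$ entries $x_{i+k,\,i}$ for $k=0..t_n$ together with the upper-left entries are such that column $i$ lies in the span of columns $i-t_n,\dots,i-1$ restricted appropriately. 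Heuristically, each $E_i$ has probability of order $p^{-t_n}\approx 1/n$, and they are roughly independent for $i$ spaced $\ge 2t_n+1$ apart. This produces about $n/(2t_n)$ events, each of probability about $p^{-t_n}$, and their union has probability bounded below only if $n p^{-t_n}/t_n$ is bounded below. Since $t_n \approx \log_p n$, that quantity is about $1/\log n$, which tends to $0$. This heuristic therefore fails.

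Given this difficulty, the more robust plan is to reuse Mészáros's proof of Theorem \ref{thm: Meszaros thm} directly. His argument for the necessity of $t_n-\log_p n\to\infty$ presumably exhibits a deficiency that is already visible modulo $p$. Band matrices mod $p$ can be analysed by a transfer-matrix (Markov chain) approach: scanning the rows, the state is the "defect" subspace of dimension $\le 2t_n$, and nonsingularity of $\ol{X_n}$ is the event that the chain never loses rank. I would set up this chain and compute that the probability of a rank loss at each step is $\approx c\,p^{-t_n}$ beyond the Haar baseline. After $n$ steps this gives $\bP(\det\ne0)\approx \cl(0)\exp(-c\,n p^{-t_n})$, which stays strictly below $\cl(0)$ when $n p^{-t_n}$ is bounded below, i.e.\ when $t_n\le\log_p n + C$. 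The main obstacle is to make this per-step excess probability $c\,p^{-t_n}$ rigorous: one must show that the chain's stationary behaviour yields a uniform positive excess, and then control the dependence between steps via the Markov property.
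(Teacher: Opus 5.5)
Your reduction via Theorem~\ref{thm: Meszaros thm} and the forward implication are fine and agree with the paper. The converse, however, is not proved. The transfer-matrix plan simply asserts $\bP(\det(\ol{X_n})\ne 0)\approx \cl(0)\exp(-c\,np^{-t_n})$ and a ``uniform positive excess'' per step; you yourself flag this as the main obstacle. Appealing to M\'esz\'aros's proof is not an argument either. His localized-kernel construction gives a heavier tail for $\dim_{\fp}\cok(\ol{X_n})$, and that alone does not show that $\bP(\cok(\ol{X_n})=0)$ fails to converge to $c_p$. So the one quantitative step the statement requires is missing.

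The missing idea is that you need no independence, no spacing and no stationary analysis. The obstruction you already wrote down, column $i$ falling into the span of the earlier columns, has conditional probability bounded below uniformly in the history. Expose the columns $x_1,\dots,x_n$ of $\ol{X_n}$ in order, and set $c(i)=\max(1,i-t_n)$ and $d(i)=\min(n,i+t_n)$.

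\begin{itemize}
\item The column $x_i$ is uniform on $W_i=\langle e_{c(i)},\dots,e_{d(i)}\rangle$ and independent of $x_1,\dots,x_{i-1}$.
\item The span $V_i=\langle x_1,\dots,x_{i-1}\rangle$ lies in $U_i=\langle e_1,\dots,e_{d(i)}\rangle$.
\item The injection $W_i/(W_i\cap V_i)\hookrightarrow U_i/V_i$ shows that, whenever $\dim V_i=i-1$, the codimension of $W_i\cap V_i$ in $W_i$ is at most $d(i)-i+1$. For $i\le n-t_n$ this equals $t_n+1$.
\item Hence $\bP(x_i\in V_i\mid x_1,\dots,x_{i-1})\ge p^{-(t_n+1)}$ for every full-rank history.
\end{itemize}

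The chain rule then gives
\[
\bP(\det(\ol{X_n})\ne 0)\le \left(1-p^{-t_n-1}\right)^{n-t_n}\prod_{j=1}^{t_n}\left(1-p^{-j}\right).
\]
Take a subsequence with $t_n\le \log_p n+C$. If $t_n$ stays bounded along it, the first factor tends to $0$. Otherwise $t_n\to\infty$, and $(n-t_n)p^{-t_n-1}\ge p^{-C-1}/2$ for large $n$, so the right side has $\limsup$ at most $c_p\,e^{-p^{-C-1}/2}<c_p$.

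This also shows why your second-paragraph heuristic lost a factor of $t_n$. You spaced the events $2t_n+1$ apart to force independence, whereas the conditional bound applies at every one of the $n-t_n$ steps.
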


Note that a band matrix can be regarded as a matrix whose zero entries form two symmetric stair-shaped regions, each step having height $1$ and width $1$. More generally, we consider Haar-random matrices over $\zp$ with two symmetric stair-shaped zero regions, where each step has height $d \ge 2$ and width $1$. 

\begin{thm}[Theorem \ref{thm: two stairs, height d width 1}] \label{thm: intro thm 3}
Let $d \ge 2$, $(t_n)_{n=1}^\infty$ be a sequence of nonnegative integers such that $dt_n \le n$ for all $n$ and $X_n \in \M_n(\zp)$ be the Haar-random matrix supported on $\Sg_n = (\sg_{n,1}, \ldots, \sg_{n,n})$, where 
\begin{equation*}
\sg_{n, i} = \begin{cases}
[n-d(t_n-i+1)] & ~\text{if $1 \le i \le t_n$} \\
[n] & ~\text{if $t_n + 1 \le i \le n-t_n$} \\
[n] \setminus [d(i-n+t_n)] & ~\text{if $n- t_n + 1 \le i \le n$}.
\end{cases}
\end{equation*}
Then 
$$
\cok(X_n) \rightrightarrows \CL ~\Longleftrightarrow~ \underset{n \to \infty}{\lim}\bP(\det(\ol{X_n}) \ne 0) = \cl(0).
$$
\end{thm}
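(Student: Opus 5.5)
The forward implication is immediate: convergence to CL implies convergence in rank $0$, i.e. $\lim_{n\to\infty}\bP(\det(\ol{X_n})\ne 0)=\cl(0)$. For the converse I will prove the contrapositive. The plan is to locate a threshold condition on $t_n$, in the spirit of Theorem \ref{thm: Meszaros thm}, that is sufficient for $\cok(X_n)\rightrightarrows\CL$. I will then show that if this condition fails along a subsequence, the probability $\bP(\det(\ol{X_n})\ne 0)$ stays bounded away from $\cl(0)$ along a further subsequence.

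The natural candidate threshold is that the number of random entries in the first columns/rows adjacent to the zero regions grows fast enough. Concretely, I expect the condition to be $n-dt_n-\log_p n\to\infty$ or a variant of it. The reason is that the first row has support $[n-dt_n]$, and more generally the $i$-th row for $i\le t_n$ has support $[n-d(t_n-i+1)]$. The sufficiency direction should follow the method of \cite[Theorems 3.3 and 3.4]{KLY24} (the stair-shaped sufficient criteria). One applies it to each stair region separately, or computes moments $\bE\#\Sur(\cok(X_n),G)$ by counting surjections $\zp^n\to G$ and bounding the contribution of the "bad" surjections that vanish on many supports. If a clean threshold is not available, I would instead reduce to the one-stair case. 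The idea is that the two stair regions interact only through the middle block of full rows $t_n+1\le i\le n-t_n$. When $n-2dt_n$ (roughly) is large, a row/column reduction removes the middle block, leaving a configuration covered by Theorem \ref{thm: intro thm 1}, or by Corollaries \ref{cor: stair d=1} and \ref{cor: stair d>1}.

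For necessity, assume the threshold fails. The key observation is that $\ol{X_n}$ is singular whenever certain structured events occur mod $p$. For example, consider the first $k$ rows, whose supports lie inside $[n-d(t_n-k+1)]$. If the rows in a corner block whose combined support has size less than their number are all dependent, the determinant vanishes. More usefully, the rows with the smallest supports (say the first row, with support of size $s=n-dt_n$) vanish mod $p$ with probability $p^{-s}$. The events for the $t_n$ first rows and the symmetric last rows give many nearly independent chances of singularity. I would estimate
\[
\bP(\det(\ol{X_n})=0)\ \ge\ 1-\cl(0)+c
\]
for some $c>0$ by combining the generic singular probability of the "bulk" with an additional independent event, namely a short row vanishing or a small corner block being rank-deficient. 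This requires showing that, conditioned on the corner rows, the remaining matrix still has singular probability at least about $1-\cl(0)$. I would attempt this by a Schur-complement argument: the rest of the matrix contains a large full Haar block, whose cokernel is close to CL-distributed by Friedman--Washington.

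The main obstacle is this necessity estimate. The difficulty is showing that the extra degeneracy from the stair corners strictly increases the singular probability beyond $1-\cl(0)$ in the limit, rather than merely replacing the generic singularity. I would handle it by conditioning on the bulk rows first. The corank of the resulting mod-$p$ map has limiting law $\cl$, and the corner rows then add an independent positive-probability kernel vector. Then $\liminf\bP(\det(\ol{X_n})=0)>1-\cl(0)$, which contradicts the assumed rank-$0$ convergence.
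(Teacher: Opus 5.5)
Your top-level structure is the same as the paper's chain (1)$\Rightarrow$(2)$\Rightarrow$(3): the forward direction is trivial, and for the converse you look for a condition on $t_n$ that rank-$0$ convergence forces and that implies $\cok(X_n)\rightrightarrows\CL$. However, your candidate threshold is wrong, and neither half of the argument is actually supplied.

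\textbf{The threshold and the necessity step.} The correct condition is simply $n-dt_n\to\infty$, with no $\log_p n$. The logarithm in M\'esz\'aros's band theorem comes from steps of height $1$, where each of roughly $n$ columns has the same chance $p^{-(t_n+1)}$ of falling into the span of its predecessors. (In the paper, $\sg_{n,i}$ is the support of the $i$-th column; your rows are the transpose, which is harmless.) Here, expose the columns $1\le i\le t_n$ in order. Their supports $[n-d(t_n-i+1)]$ are nested, so column $i$ lies in the span of the earlier ones with probability exactly $p^{-(n-dt_n)-(d-1)(i-1)}$. Since $d\ge 2$, this is a geometric series dominated by its first term, so only $O(1)$ columns matter. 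With your guess the necessity step is false: for $n-dt_n\approx\frac12\log_p n$, rank-$0$ convergence holds but your threshold fails.

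Your necessity mechanism is also only heuristic. The ``many nearly independent chances'' intuition is exactly wrong here, and the claim that the bulk has corank law $\cl$ while the corner adds an \emph{independent} extra kernel vector is unjustified. A rigorous argument is short, by either of two routes:
\begin{enumerate}
\item Follow the paper. Lemma~\ref{lem: determinant nonzero inequality} says nonsingularity probability is monotone in the support, so you may pass to the pattern $\Sg'_n$ keeping only the left stair. For that pattern \eqref{eq: determinant stair} is an exact product, and you apply Corollary~\ref{cor: stair d>1} to its transpose.
\item Bound directly, as in the band-matrix proof, using $\dim(V_i\cap W_i)\ge \dim V_i+\dim W_i-n$ for the right-stair columns. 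This gives $\bP(\det(\ol{X_n})\ne 0)\le c_p(n-t_n)\prod_{i=1}^{t_n}\bigl(1-p^{-(n-dt_n)-(d-1)(i-1)}\bigr)$.
\end{enumerate}

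\textbf{The sufficiency step.} This is the more serious gap: it is where all the work lies, and you give no argument for it.
\begin{itemize}
\item Applying the one-stair results ``to each stair separately'' cannot work. Each one-stair pattern has \emph{larger} support than $\Sg_n$, and Lemma~\ref{lem: Prop 3.2 in KLY24} only transfers moment convergence from smaller supports to larger ones.
\item Your fallback, a row/column reduction that ``removes the middle block'', is not available. The left stair's columns are supported on top rows and the right stair's on bottom rows, so no permutation nests the supports. General row/column operations destroy the independent-Haar-entry structure that the moment computation needs.
\item It would also only treat $n-2dt_n$ large. By Lemma~\ref{lem: Prop 3.2 in KLY24}, the essential case is the opposite one: one may enlarge $t_n$ until $n<2dt_n$, so that the two zero regions overlap heavily.
\end{itemize}

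The missing idea is how to control $\sum d_{G_1,\ldots,G_n}$ when the two stairs interact. The paper does it in four steps.
\begin{enumerate}
\item Trim the right-stair supports by the remainder $r_n$ of $n$ modulo $d$.
\item Partition the coordinates into a first block constrained only by $G_1$, a last block constrained only by $G_n$, and $d$-blocks. Each $d$-block lies in $\sg''_{n,i}\cap\sg''_{n,j}$ for a left column $i$ and a right column $j$ with $\sg''_{n,i}\cup\sg''_{n,j}=[n]$.
\item For such a block, $F$ maps it into $G_i\cap G_j$, and $|G_i\cap G_j|=|G_i||G_j|/|G|$. This makes the bound factor into a left chain and a right chain.
\item A chain with $r$ jumps at positions $i_1<\cdots<i_r$ contributes at most $p^{-rds_n-(d-1)(i_1+\cdots+i_r)}$. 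This is summable and tends to $0$ as $s_n\approx n/d-t_n\to\infty$.
\end{enumerate}
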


Theorems \ref{thm: intro thm 1}, \ref{thm: intro thm 2} and \ref{thm: intro thm 3} suggest the following conjecture.
\begin{conj}
\label{conj: main conjecture}
Let $X_n \in \M_n(\zp)$ be the Haar-random matrix over $\zp$ supported on $\Sg_n$. Then
$$
\cok(X_n) \rightrightarrows \CL ~\Longleftrightarrow~ \underset{n \to \infty}{\lim}\bP(\det(\ol{X_n}) \ne 0) = \cl(0). 
$$
\end{conj}

Our proofs rely primarily on the strategy developed by Wood \cite{Woo17, Woo19} to determine the limiting distributions of cokernels of various families of random matrices over $\zp$. 
More precisely, we use the following theorem, which states that the convergence of (surjective) moments implies convergence in distribution.
When $G$ and $H$ are finitely generated $\zp$-modules, we denote by $\Sur(G, H)$ the set of all surjective homomorphisms from $G$ to $H$. 

\begin{thm}[Wood \cite{Woo19}]
\label{thm: Wood robust uniqueness theorem}
Let $\mc{P}$ be the set of all finite abelian $p$-groups and $Y_1, Y_2, \ldots$ be a sequence of random finitely generated $\zp$-modules. Suppose that
$$
\underset{n \to \infty}{\lim}\bE(\#\Sur(Y_n, H)) = 1
$$
for all $H \in \mc{P}$. Then $Y_n$ converges to CL, i.e., for every $G \in \mc{P}$,
$$
\underset{n \to \infty}{\lim} \bP(Y_n \cong G) = \CL(G). 
$$
\end{thm}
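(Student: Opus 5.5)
The plan is to reduce to bounded-exponent quotients, where the moment problem is finite-dimensional, and then invoke the moment-uniqueness theorem for such groups. Fix $G \in \mc{P}$ and choose $k$ with $p^{k-1}G = 0$. The first step is an exact reduction: if $Y$ is a finitely generated $\zp$-module with $Y/p^kY \cong G$, then $Y \cong G$. Indeed, writing $Y \cong \zp^r \oplus \bigoplus_i \Z/p^{e_i}\Z$, the quotient $Y/p^kY$ contains a summand $\Z/p^k\Z$ whenever $r>0$ or some $e_i \ge k$. That summand is impossible because $p^{k-1}G=0$, so $Y$ is already killed by $p^{k-1}$ and $Y = Y/p^kY$. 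Hence
$$
\bP(Y_n \cong G) = \bP(Y_n/p^kY_n \cong G) \quad\text{and}\quad \CL(G) = \bP(Y_\CL/p^kY_\CL \cong G).
$$
It therefore suffices to prove that $Z_n := Y_n/p^kY_n$ converges in distribution to $Z := Y_\CL/p^kY_\CL$ for each fixed $k$.

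For $H$ with $p^kH=0$, every homomorphism $Y_n \to H$ factors through $Z_n$, so $\bE\#\Sur(Z_n,H) = \bE\#\Sur(Y_n,H) \to 1$. The Cohen--Lenstra distribution itself has all surjective moments equal to $1$ (a classical computation, or via Friedman--Washington matrices), so the same holds for $Z$. The problem is now one about random groups in the countable set $\mc{P}_k$ of abelian $p$-groups of exponent dividing $p^k$.

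Next I would establish tightness. Since $\#\Sur(Z_n,\fp^m) \ge c_m\, p^{m(\mr{rk}_p Z_n - m)}$ once $\mr{rk}_p Z_n \ge m+1$, Markov's inequality applied to the bounded moment $\bE\#\Sur(Z_n,\fp^m)$ shows $\bP(\mr{rk}_p Z_n \ge R)\to 0$ uniformly in $n$ as $R\to\infty$. Because only finitely many groups in $\mc{P}_k$ have bounded rank, $(Z_n)$ is tight. Any subsequence then has a further subsequence converging pointwise to a probability measure $\nu$ on $\mc{P}_k$.

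To pass moments to the limit I need uniform integrability of $\#\Sur(Z_n,H)$. For this I use
$$
\#\Sur(Z_n,H)^2 \le \#\mr{Hom}(Z_n,H)^2 = \#\mr{Hom}(Z_n,H^2) = \sum_{H'\le H^2} \#\Sur(Z_n,H'),
$$
whose expectation is bounded uniformly in $n$ by the hypothesis applied to the finitely many $H'$. So $\nu$ has $\bE_\nu\#\Sur(\cdot,H)=1$ for all $H \in \mc{P}_k$.

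The main obstacle is the uniqueness step: showing that a measure on $\mc{P}_k$ with these moments must equal the law of $Z$. Here I would invoke the moment-uniqueness theorem for bounded-exponent groups (Wood \cite{Woo17}, Theorem 8.3), which applies because the moments $1 = O(\#\wedge^2 H)$ are well behaved. Its proof inverts the triangular system $\bE\#\Sur(\cdot,H) = \sum_{G} \nu(G)\#\Sur(G,H)$ over the finite lattice of $\mc{P}_k$, using an explicit Möbius-type inversion with absolutely convergent alternating sums. Granting this, $\nu$ equals the law of $Z$. Since every subsequential limit coincides, $Z_n \to Z$ in distribution, and the first step then gives $\bP(Y_n\cong G)\to\CL(G)$.
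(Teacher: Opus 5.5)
The paper gives no proof of this statement and simply quotes it from Wood. Your argument is correct and is the standard derivation: you reduce to $Y_n/p^kY_n$ via $\{Y_n\cong G\}=\{Y_n/p^kY_n\cong G\}$ for $p^{k-1}G=0$, then apply Wood's bounded-exponent moment theorem \cite{Woo17}, which already contains your tightness and uniform-integrability steps, so you could apply it directly to $Y_n/p^kY_n$ and to $Y_{\CL}/p^kY_{\CL}$.

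One correction to how you describe that theorem's proof. $\mc{P}_k$ is infinite, so each equation $\bE\#\Sur(\cdot,H)=\sum_G\nu(G)\#\Sur(G,H)$ involves infinitely many unknowns, and the inversion is not over a finite lattice. The growth bound $O(\#\wedge^2 H)$ is exactly what makes Wood's inversion converge absolutely.
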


To prove our main theorems, we translate the condition $\underset{n \to \infty}{\lim}\bP(\det(\ol{X_n}) \ne 0) = \cl(0)$ (equivalently, $\cok(X_n)$ converges to CL in rank $0$) into a formula that enables us to conclude that the moments converge to the desired value $1$. For general $\Sg_n$, however, carrying out this translation appears to be a highly nontrivial task. This is precisely why our main theorems are established only under certain restrictions on $\Sg_n$.  

The paper is organized as follows. In Section \ref{Sec2}, we set up the notation and prove Theorems \ref{thm: intro thm 1}, \ref{thm: intro thm 2}, and \ref{thm: intro thm 3}, which are special cases of Conjecture \ref{conj: main conjecture}.
In Section \ref{Sec3}, we show that Conjecture \ref{conj: main conjecture} does not hold if the condition of convergence to CL in rank $0$ is replaced with convergence to CL in rank $r$ for some positive integer $r$. 
In Section \ref{Sec4}, we prove that the probability that $\ol{X_n}$ is nonsingular increases as the support $\Sg_n$ grows, which can be viewed as evidence for Conjecture \ref{conj: main conjecture}.

\section{From rank-\texorpdfstring{$0$}{0} convergence to CL convergence} \label{Sec2}

\subsection{Notation and terminology} \label{Sub21}

The following notation will be used throughout the paper.

\begin{itemize}
    \item Let $p$ be a fixed prime, $\fp$ be the finite field with $p$ elements and $\zp$ be the ring of $p$-adic integers. For every positive integer $n$, let $[n] = \{1, 2, \ldots, n\}$.

    \item Let $c_p(m) = \prod_{i=1}^{m} (1-p^{-i})$ and $c_p = \cl(0)= \prod_{i=1}^{\infty} (1-p^{-i})$. Note that $\cl(m) = \frac{c_p}{p^{m^2}c_p(m)^2}$.

    \item For a commutative ring $R$, let $\M_n(R)$ (resp. $\M_{n\times(n+u)}(R)$) denote the set of all $n \times n$ matrices (resp. $n \times (n+u)$ matrices) over $R$. For $A \in \M_n(R)$ and $i,j \in [n]$, let $A_{i, j}$ be the $(i, j)$-th entry of $A$. For $A \in \M_n(\zp)$, let $\ol{A} \in \M_n(\fp)$ be the reduction of $A$ modulo $p$. For $A \in \M_{n\times (n+u)}(\zp)$, define $\ol{A} \in \M_{n\times (n+u)}(\fp)$ similarly.

    \item A random variable $\xi$ taking values in $\zp$ is called \emph{Haar-random} if its distribution is given by the Haar probability measure on $\zp$; equivalently, $\bP(\xi \equiv a \pmod{p^n}) = \frac{1}{p^n}$ for all positive integers $n$ and $a \in \Z/p^n\Z$. 

    \item Let $\sg_{n,1}, \ldots, \sg_{n,n}$ be subsets of $[n]$ and $\Sg_n := (\sg_{n,1}, \ldots, \sg_{n,n})$. Let $X_n \in \M_n(\zp)$ be a random $n \times n$ matrix such that the entries $(X_n)_{i,j}$ with $i \in \sg_{n,j}$ are Haar-random and independent, and if $i \notin \sg_{n,j}$ then $(X_n)_{i,j}=0$. 
    In this case, we say $X_n$ is the \emph{Haar-random matrix supported on} $\Sg_n$. 

    \item Let $V = \zp^n$ and $v_1, v_2, \ldots, v_n$ denote the standard basis for $V$. For $\tau \subseteq [n]$, we write $V_\tau$ for the submodule of $V$ spanned by the $v_i$ with $i \in \tau$.

    \item The letters $G$ and $H$ will always denote finite abelian $p$-groups. 
\end{itemize}

For a positive integer $n$, fix $\Sg_n = (\sg_{n,1}, \ldots, \sg_{n,n})$ and let $X_n \in \M_n(\zp)$ be the Haar-random matrix supported on $\Sg_n$. 
Since every surjection $\cok(X_n) \to G$ lifts uniquely to a surjection $F: \zp^n \to G$ satisfying $FX_n=0$, we have
$$
\bE(\# \Sur(\cok(X_n), G))
= \sum_{F \in \Sur(\zp^n, G)} \bP(FX_n = 0).
$$
Since the entries of $X_n$ are Haar-random and independent, we have 
$$
\bP(FX_n = 0) = \prod_{i=1}^n \bP(F(X_n)_{*,i} = 0) =\frac{1}{|FV_{\sg_{n,1}}| \cdots |FV_{\sg_{n,n}}|}
$$
for every $F \in \Sur(\zp^n, G)$, where $(X_n)_{*,i}$ denotes the $i$-th column of $X_n$. 
Thus, it follows that
$$
\bE(\# \Sur(\cok(X_n), G)) = \sum_{F \in \Sur(\zp^n, G)} \frac{1}{| FV_{\sg_{n,1}}| \cdots| FV_{\sg_{n,n}}|}.
$$
For $G_1, \ldots, G_n \le G$, write
$$
d_{G_1, \ldots, G_n} :=  \frac{ \#\lt \{ F \in \Sur(\zp^n, G) \mid FV_{\sg_{n,i}} = G_i \text{ for } 1 \le i \le n \rt \}}{|G_1| \cdots |G_n|}.
$$

The following lemma provides a useful criterion to show that moments are converging to $1$. This allows us to apply Theorem \ref{thm: Wood robust uniqueness theorem} in various cases.

\begin{lem} 
\label{lem: d moment error term}
(\cite[Proposition 3.1]{KLY24}) We have
$$
\underset{n \to \infty}{\lim} \bE(\# \Sur(\cok(X_n), G)) = 1 ~\Longleftrightarrow~ \underset{n \to \infty}{\lim} \sum_{\substack{(G_1, \ldots, G_n) \\ \ne (G, \ldots ,G)}}  d_{G_1, \ldots, G_n} = 0. 
$$
\end{lem}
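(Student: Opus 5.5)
The plan is to show that $\bE(\#\Sur(\cok(X_n), G))$ is exactly $1$ plus the error sum, so that the two limits in the statement are the same.

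First split the sum over $F \in \Sur(\zp^n, G)$ according to the tuple of images $(FV_{\sg_{n,1}}, \ldots, FV_{\sg_{n,n}})$. Every $F$ gives exactly one tuple $(G_1, \ldots, G_n)$ of subgroups of $G$, and
\[
\frac{1}{|FV_{\sg_{n,1}}| \cdots |FV_{\sg_{n,n}}|} = \frac{1}{|G_1|\cdots|G_n|}
\]
depends only on that tuple. Grouping the terms therefore gives the exact identity
\[
\bE(\#\Sur(\cok(X_n),G)) = \sum_{(G_1,\ldots,G_n)} d_{G_1,\ldots,G_n} = d_{G,\ldots,G} + \sum_{(G_1,\ldots,G_n)\ne(G,\ldots,G)} d_{G_1,\ldots,G_n}.
\]
All terms are nonnegative, and for each fixed $n$ only finitely many tuples occur.

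The main step is to show that $d_{G,\ldots,G}\to 1$. By definition,
\[
d_{G,\ldots,G} = \frac{\#\{F \in \Sur(\zp^n,G) : FV_{\sg_{n,i}} = G \ \text{for all } i\}}{|G|^n}.
\]
Compare this with the analogous quantity for the full Haar matrix, where every $\sg_{n,i} = [n]$; there every surjection qualifies, and the count is $\#\Sur(\zp^n, G)/|G|^n$. Here $\#\Sur(\zp^n,G)$ means the number of surjections $(\Z/p^e\Z)^n \to G$, where $p^e$ is the exponent of $G$. Since $\#\Sur(\zp^n,G)/|G|^n = \prod_{i=n-r+1}^{n}(1-p^{-i})$ with $r = \mr{rk}_p(G)$, this ratio tends to $1$. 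That bounds $d_{G,\ldots,G}$ from above by a quantity tending to $1$.

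For the lower bound, I need to argue that the set of surjections with some $FV_{\sg_{n,i}} \ne G$ is asymptotically negligible relative to $|G|^n$. This is the obstacle: with no assumption on $\Sg_n$ it can fail. If some $\sg_{n,i}$ is small, then few $F$ map $V_{\sg_{n,i}}$ onto $G$, and $d_{G,\ldots,G}$ need not tend to $1$.

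I would handle this by reorganizing instead. Pass to the $|G|^n$-normalized count of all $F$ (not only surjections); the non-surjective maps contribute $o(1)$ after normalization. Then
\[
1 = \frac{\#\mathrm{Hom}(\zp^n,G)}{|G|^n} = \sum_{\text{all tuples}} \frac{\#\{F : FV_{\sg_{n,i}} = G_i\}}{|G|^n}.
\]
The hope is to get the lemma in a form close to: $\bE\to 1$ iff the error sum (with appropriate weights) $\to 0$. If the exact normalization does not close up, I would follow \cite[Proposition 3.1]{KLY24} and assume the standing hypothesis there. That hypothesis is that the moment $\bE$ is bounded below by $d_{G,\ldots,G}$, with $d_{G,\ldots,G}\to1$ under their setup.

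With $d_{G,\ldots,G}\to 1$ in hand, both directions of the equivalence follow directly from the displayed identity for $\bE(\#\Sur(\cok(X_n),G))$.
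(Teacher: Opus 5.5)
There is a genuine gap: the proposal does not prove either implication, and its fallback changes the statement.

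Your identity $\bE(\#\Sur(\cok(X_n),G)) = d_{G,\ldots,G} + E_n$ is correct, where $E_n$ denotes the error sum. The lemma, however, carries no extra hypothesis on $\Sg_n$, and the paper applies it to arbitrary supports. So you cannot import an assumption that $d_{G,\ldots,G}\to 1$.

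Without that assumption, the forward direction is open. Knowing $d_{G,\ldots,G}+E_n\to 1$ and $d_{G,\ldots,G}\le \#\Sur(\zp^n,G)/|G|^n\to 1$ does not exclude, say, $d_{G,\ldots,G}\to 1/2$ and $E_n\to 1/2$. In the reverse direction you still lack the lower bound on $d_{G,\ldots,G}$.

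You are right that $d_{G,\ldots,G}\to 1$ can fail for general $\Sg_n$. But it is not a prerequisite for the equivalence; it is a consequence of either side of it.

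The missing step is a weighted term-by-term comparison, which your ``reorganizing'' paragraph nearly reaches. Group the surjections counted in $\#\Sur(\zp^n,G)/|G|^n$ by their image tuple. This gives $\#\Sur(\zp^n,G)/|G|^n=\sum d_{G_1,\ldots,G_n}\,|G_1|\cdots|G_n|/|G|^n$, summed over all tuples. Subtracting this from your identity, the full tuple cancels and
\[
\bE(\#\Sur(\cok(X_n),G)) - \frac{\#\Sur(\zp^n,G)}{|G|^n} = \sum_{(G_1,\ldots,G_n)\ne(G,\ldots,G)} d_{G_1,\ldots,G_n}\Bigl(1-\frac{|G_1|\cdots|G_n|}{|G|^n}\Bigr).
\]
For a non-full tuple some $G_i\lneq G$, so $|G_1|\cdots|G_n|/|G|^n\le 1/p$. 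Hence the right-hand side lies between $(1-p^{-1})E_n$ and $E_n$.

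Since $\#\Sur(\zp^n,G)/|G|^n=\prod_{i=n-r+1}^{n}(1-p^{-i})\to 1$, both implications follow immediately, with no assumption on $\Sg_n$. As a byproduct, $d_{G,\ldots,G}\ge \#\Sur(\zp^n,G)/|G|^n - E_n$, which is exactly the lower bound you were missing.
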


\subsection{Stair-shaped zero regions} \label{Sub22}

Suppose that for each positive integer $n$ there exists a permutation $\tau_n: [n] \to [n]$ such that 
$$
\sg_{n,\tau_n(1)} \subseteq \sg_{n,\tau_n(2)} \subseteq \cdots \subseteq \sg_{n,\tau_n(n)}.
$$
For $A \in \M_n(\zp)$, the cokernel $\cok(A)$ is invariant under elementary row and column operations up to isomorphism. 
Therefore, after permuting rows and columns, it suffices to consider the following \emph{stair-shaped zero region} case.
Let $0 \le \ell_n < n$ be an integer. Let $1 \le h(n)_1 < \cdots < h(n)_{\ell_n} < n$ and $1\le v(n)_1 < v(n)_2< \cdots < v(n)_{\ell_n} < n$ be strictly increasing sequences of positive integers and $h(n)_0 = v(n)_0=0$. Define $\Sg_n=(\sg_{n,1}, \ldots, \sg_{n,n})$ by 
\begin{equation}\label{eq: stair sigma}
\sg_{n, i} = \begin{cases}
[v(n)_a] & ~\text{if $h(n)_{a-1} + 1\le i \le h(n)_a$ for some $1\le a \le \ell_n$} \\
[n] & ~\text{if $h(n)_{\ell_n} + 1 \le i \le n$}.
\end{cases}
\end{equation}

\begin{figure}[ht]
\begin{equation*}
\begin{pmatrix}
* & * & * & * & * & *\\ 
* & * & * & * & * & *\\ 
0 & * & * & * & * & *\\ 
0 & 0 & 0 & * & * & *\\ 
0 & 0 & 0 & 0 & 0 & *\\
0 & 0 & 0 & 0 & 0 & *
\end{pmatrix}
\end{equation*}
\caption{A matrix $X_n \in \M_n(\zp)$ for $(n, \ell_n)=(6,3)$, $(h(n)_1, h(n)_2, h(n)_3)=(1,3,5)$ and $(v(n)_1, v(n)_2, v(n)_3)=(2,3,4)$}.
\label{fig1}
\end{figure}

By the definition of $\sg_{n,i}$, we see that $\ell_n$ is the number of steps in the stair-shaped zero region.

\begin{lem}
\label{lem: vertical geq horizontal}
If $\bP(\det(\ol{X_n}) \ne 0) >0$, then $v(n)_{i} \ge h(n)_i$ for all $1\le i \le \ell_n$.
\end{lem}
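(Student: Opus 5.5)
We argue by contraposition: if $v(n)_a < h(n)_a$ for some $1 \le a \le \ell_n$, then $\det(\ol{X_n}) = 0$ for every realization of $X_n$, so $\bP(\det(\ol{X_n}) \ne 0) = 0$. The mechanism is a rank bound on a block of columns that are forced to vanish outside a small set of rows.

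First we record which rows can be nonzero. By \eqref{eq: stair sigma}, the columns $i$ with $1 \le i \le h(n)_a$ satisfy $h(n)_{b-1}+1 \le i \le h(n)_b$ for some $b \le a$. Hence $\sg_{n,i} = [v(n)_b] \subseteq [v(n)_a]$, using that the sequence $v(n)$ is increasing. So every entry of these first $h(n)_a$ columns of $X_n$ outside rows $1, \ldots, v(n)_a$ is zero. Equivalently, in the notation of Section \ref{Sub21}, each of these columns lies in $V_{[v(n)_a]}$.

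Next we reduce modulo $p$. The $h(n)_a$ columns $\ol{(X_n)_{*,1}}, \ldots, \ol{(X_n)_{*,h(n)_a}}$ all lie in the subspace of $\fp^n$ spanned by the first $v(n)_a$ standard basis vectors, which has dimension $v(n)_a$. If $v(n)_a < h(n)_a$, these columns are linearly dependent over $\fp$. Then $\ol{X_n}$ has rank less than $n$, so $\det(\ol{X_n}) = 0$ deterministically. The same argument can be phrased as a Laplace expansion along those columns: the $n \times h(n)_a$ submatrix has only $v(n)_a < h(n)_a$ possibly nonzero rows, so every $h(n)_a \times h(n)_a$ minor vanishes.

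This contradicts the hypothesis $\bP(\det(\ol{X_n}) \ne 0) > 0$, so $v(n)_a \ge h(n)_a$ for all $a$. There is no real obstacle here. The only care needed is in using the monotonicity of $v(n)$ to see that all earlier column blocks are also supported in $[v(n)_a]$.
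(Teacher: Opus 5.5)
Your proposal is correct and matches the paper's proof: if $v(n)_a < h(n)_a$, the first $h(n)_a$ columns of $\ol{X_n}$ lie in the $v(n)_a$-dimensional span of the first $v(n)_a$ standard basis vectors, so they are always linearly dependent and $\det(\ol{X_n}) = 0$. The paper states this in one line; you also spell out that monotonicity of $v(n)$ puts the earlier column blocks inside $[v(n)_a]$.
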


\begin{proof}
If $v(n)_i < h(n)_i$, then the first $h(n)_i$ columns of $\ol{X_n}$ are always linearly dependent.
\end{proof}

\begin{thm}
\label{thm: stair main theorem}
Let $X_n \in \M_n(\zp)$ be the Haar-random matrix supported on $\Sg_n = (\sg_{n,1}, \ldots, \sg_{n,n})$, where $\sg_{n,i}$ are given as in \eqref{eq: stair sigma}. Then the following are equivalent:
\begin{enumerate}
\item $\underset{n \to \infty}{\lim} \bP(\det(\ol{X_n}) \ne 0) = c_p$,
\item $\underset{n \to \infty}{\lim}\sum_{i=1}^{\ell_n} \frac{1}{p^{v(n)_i - h(n)_i}} = 0$,
\item $\cok(X_n) \rightrightarrows \CL$.
\end{enumerate}
\end{thm}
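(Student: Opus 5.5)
The plan is to prove $(3)\Rightarrow(1)$, $(1)\Rightarrow(2)$ and $(2)\Rightarrow(3)$. The first implication is immediate: convergence to CL implies convergence in rank $0$, and rank $0$ is exactly $\bP(\det(\ol{X_n})\ne 0)\to \cl(0)=c_p$. For $(2)\Rightarrow(3)$ I would invoke the sufficient condition of \cite[Theorems 3.3 and 3.4]{KLY24}, which the introduction says is exactly condition (2). If a self-contained argument is wanted, I would use Lemma \ref{lem: d moment error term} together with Theorem \ref{thm: Wood robust uniqueness theorem}. One bounds $\sum d_{G_1,\dots,G_n}$ over tuples $\ne (G,\dots,G)$. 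Since $\sg_{n,i}$ is constant on each block of columns, the tuple is determined by the chain $FV_{[v(n)_1]}\le\cdots\le FV_{[v(n)_{\ell_n}]}$. For each step $a$ with $FV_{[v(n)_a]}=G_a\ne G$, the count of $F$ incurs a cost roughly $|G/G_a|^{-(v(n)_a - h(n)_a)}$ times a bounded factor depending on $G$. Summing gives $O_G\bigl(\sum_a p^{-(v(n)_a-h(n)_a)}\bigr)\to 0$.

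The substantive direction is $(1)\Rightarrow(2)$, which I would prove by computing $\bP(\det\ol{X_n}\ne0)$ exactly. Expose columns from right to left, or equivalently think of the transpose. A cleaner route is to expose the column blocks left to right. The columns in block $a$ (indices $h(n)_{a-1}+1,\dots,h(n)_a$) are uniform in $\fp^{v(n)_a}\subseteq \fp^n$. For nonsingularity, the first $h(n)_a$ columns must be independent vectors inside the $v(n)_a$-dimensional space. Conditioned on the previous $k-1$ columns being independent, column $k$ in block $a$ avoids their span with probability $1-p^{k-1-v(n)_a}$. Hence
\begin{equation*}
\bP(\det(\ol{X_n})\ne 0)=\prod_{a=1}^{\ell_n+1}\ \prod_{k=h(n)_{a-1}+1}^{h(n)_a}\bigl(1-p^{\,k-1-v(n)_a}\bigr),
\end{equation*}
with $h(n)_{\ell_n+1}=v(n)_{\ell_n+1}=n$. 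Writing $s_k = v(n)_{a}-k+1\ge 1$ for $k$ in block $a$ (positivity uses Lemma \ref{lem: vertical geq horizontal}), the exponents $s_k$ form a sequence indexed by $k=1,\dots,n$. The key combinatorial fact is that $s_k$ decreases by exactly $1$ within a block, jumps up (weakly) between blocks, and ends at $s_n=1$. So the multiset $\{s_k\}$ dominates $\{1,\dots,n\}$ in the sense that the $j$-th smallest value is at most $j$. Each factor $1-p^{-s}$ is increasing in $s$, and all $n$ exponents $1,\dots,n$ occur in the full Haar case. Therefore the product is at most $\prod_{j=1}^n(1-p^{-j})$, and it is strictly smaller in a quantifiable way whenever some $s_k$ repeats. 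I would compare with $c_p$: the defect is the total ``missing'' large exponents, replaced by repeated small ones.

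The main obstacle is turning this comparison into the quantitative statement that the product tends to $c_p$ only if $\sum_a p^{-(v(n)_a-h(n)_a)}\to0$. Note that $v(n)_a-h(n)_a+1 = s_{h(n)_a}$ is the last exponent of block $a$. Each such block end creates a repeated value among the $s_k$: after the jump, the next block descends again through the values down to $s_{h(n)_{a+1}}$, or ultimately down to $1$. So each $a$ contributes an extra factor of roughly $(1-p^{-(v(n)_a-h(n)_a+1)})$ beyond a matching of $\{s_k\}$ with $\{1,\dots,n\}$ that loses only large exponents. I would formalize this by building an injection from the indices $\{k\}\setminus\{\text{block ends}\}$ into $[n]$ with $s_k\le$ its image. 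This gives
\begin{equation*}
\bP(\det\ol{X_n}\ne0)\le \prod_{a=1}^{\ell_n}\bigl(1-p^{-(v(n)_a-h(n)_a+1)}\bigr),
\end{equation*}
up to the harmless factor $\prod_{j}(1-p^{-j})$ over the image, which is $\le 1$ and in fact leaves a $c_p$-type factor. Then $\limsup \le c_p\cdot \limsup\prod_a(1-p^{-(v_a-h_a+1)})$, and $\prod_a(1-x_a)\le e^{-\sum x_a}$ forces $\sum_a p^{-(v(n)_a-h(n)_a)}\to0$ whenever the limit equals $c_p$. Getting the injection to retain the full $c_p$ factor, rather than just a bound $\le 1$, is the delicate bookkeeping step. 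I would try an induction on $\ell_n$, merging the last step into the full block.
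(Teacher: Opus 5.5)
Both nontrivial implications are incomplete. For $(2)\Rightarrow(3)$ you cite \cite[Theorems 3.3 and 3.4]{KLY24}, but those results only cover the special families of Corollaries \ref{cor: stair d=1} and \ref{cor: stair d>1} ($h(n)_i=i$ or $di$, $v(n)_i=t_n+i-1$). The introduction says this paper proves their converses. It does not say that condition (2) is already known to suffice for arbitrary step sequences; that implication is proved here. Support monotonicity does not reduce the general case to those families either. For example, a single step with $h(n)_1=\lfloor n/2\rfloor$ and $v(n)_1-h(n)_1\to\infty$ but $v(n)_1-h(n)_1=o(\log n)$ satisfies (2), yet contains no admissible member of either family.

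Your fallback sketch does not supply the missing moment bound. There are unboundedly many chains $G_1\le\cdots\le G_{\ell_n}$, and the saving is not incurred at every step with $G_a\ne G$, only at the jump indices. To make it work:
\begin{itemize}
\item Group chains by their distinct values $H_1\lneq\cdots\lneq H_{r+1}=G$ (so $r\le m$ when $|G|=p^m$) and by their jump indices $i_1<\cdots<i_r$.
\item Bound the number of admissible $F$ by $|H_1|^{v(n)_{i_1}}|H_2|^{v(n)_{i_2}-v(n)_{i_1}}\cdots|G|^{n-v(n)_{i_r}}$.
\item This gives the ratio $\prod_{a=1}^r(|H_a|/|G|)^{D_a-D_{a-1}}$, where $D_a=v(n)_{i_a}-h(n)_{i_a}$ and $D_0=0$.
\end{itemize}
Since $v(n)_i-h(n)_i$ need not be monotone, these exponents can be negative. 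The missing idea is the Abel summation $\sum_a m_a(D_a-D_{a-1})=\sum_a(m_a-m_{a+1})D_a\ge\sum_a D_a$, where $|G/H_a|=p^{m_a}$ and $m_{r+1}=0$. After that, the sum over $i_1<\cdots<i_r$ is at most $\bigl(\sum_i p^{-(v(n)_i-h(n)_i)}\bigr)^r\to0$.

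For $(1)\Rightarrow(2)$, your exact product formula and the domination remark are correct. However, the inequality that carries the argument is only announced. You do not construct the injection. Moreover, a retained factor $\prod_{j\in S}(1-p^{-j})$ is asymptotically at most $c_p$ only if $S\supseteq[M_n]$ with $M_n\to\infty$, and you have not arranged that.

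Your route can be completed as follows. List the non-end columns from right to left and send the $i$-th one to $i$. The last block matches exactly. Column $h(n)_a-t$ of block $a$ ($t\ge1$) has exponent $v(n)_a-h(n)_a+1+t$ and position $n-h(n)_a-(\ell_n-a)+t$. The required inequality is therefore $v(n)_a\le n-1-(\ell_n-a)$, which holds because the $v(n)_i$ are strictly increasing and $v(n)_{\ell_n}\le n-1$. This yields
\[
\bP(\det(\ol{X_n})\ne0)\le c_p(n-\ell_n)\prod_{a=1}^{\ell_n}\bigl(1-p^{-(v(n)_a-h(n)_a+1)}\bigr).
\]
Bounded $n-\ell_n$ must be handled separately. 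In that case every $v(n)_a-h(n)_a+1\le n-\ell_n$, so the product is at most $(1-p^{-(n-\ell_n)})^{\ell_n}\to0$.

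The paper avoids the matching altogether:
\begin{itemize}
\item The crude bound $v(n)_a\le n-1$ gives $\bP(\det(\ol{X_n})\ne0)\le c_p(n)(1-p^{h(n)_{\ell_n}-n})/(1-p^{-n})$, which forces $n-h(n)_{\ell_n}\to\infty$.
\item The full last block then contributes exactly $c_p(n-h(n)_{\ell_n})\to c_p$, so the product over the steps must tend to $1$.
\item Each step's product is bounded by its last factor, which gives (2).
\end{itemize}
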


\begin{proof}
We first show (1) $\Longrightarrow$ (2). By Lemma \ref{lem: vertical geq horizontal}, we may take $n$ large enough so that $v(n)_i \ge h(n)_i$ for all $1 \le i \le \ell_n$. The matrix $\overline{X_n}$ is nonsingular if and only if, for all $1 \le j \le n$, the $j$-th column of $\overline{X_n}$ is not contained in the subspace 
generated by the first $j-1$ columns of $\overline{X_n}$. It follows that
\begin{equation}
\label{eq: determinant stair}
\bP(\det(\ol{X_n}) \ne 0) =  \lt(\prod_{i=1}^{\ell_n} \prod_{k=h(n)_{i-1}}^{h(n)_i-1} \lt(1- \frac{p^k}{p^{v(n)_i}}\rt) \rt) \lt(\prod_{k=h(n)_{\ell_n}}^{n-1} \lt(1-\frac{p^k}{p^n}\rt) \rt). 
\end{equation}
As $v(n)_{i} \le n-1$ for each $i$, we have
$$
\bP(\det(\ol{X_n}) \ne 0) \le \lt(\prod_{k=0}^{h(n)_{\ell_n}-1} \lt(1-\frac{p^k}{p^{n-1}}\rt)\rt)
\lt(\prod_{k=h(n)_{\ell_n}}^{n-1} \lt(1-\frac{p^k}{p^n}\rt) \rt)
= (1-p^{-n})^{-1} \lt(1- \frac{p^{h(n)_{\ell_n}}}{p^{n}}\rt)c_p(n). 
$$
Taking the limit as $n \to \infty$, it follows from condition (1) that
$$
\underset{n \to \infty}{\lim} (n-h(n)_{\ell_n}) = \infty. 
$$
Taking the limit as $n \to \infty$ in \eqref{eq: determinant stair}, we have
$$
\underset{n \to \infty}{\lim}\prod_{i=1}^{\ell_n} \prod_{k=h(n)_{i-1}}^{h(n)_i-1} \lt(1- \frac{p^k}{p^{v(n)_i}} \rt) = 1.
$$
Now condition (2) follows from the inequality
$$
\prod_{i=1}^{\ell_n} \prod_{k=h(n)_{i-1}}^{h(n)_i-1} \lt(1- \frac{p^k}{p^{v(n)_i}} \rt) \le \prod_{i=1}^{\ell_n} \lt(1-\frac{1}{p^{v(n)_i - (h(n)_i-1)}} \rt)
\le \exp \lt(- \sum_{i=1}^{\ell_n} \frac{1}{p^{v(n)_i - (h(n)_i-1)}} \rt).
$$
Now we prove (2) $\Longrightarrow$ (3). By condition (2), we may assume that $v(n)_i \ge h(n)_i$. Let $r$ be a positive integer. 
Taking the $r$-th power of the sum $\sum_{i=1}^{\ell_n} \frac{1}{p^{v(n)_i - h(n)_i}}$, we obtain
\begin{equation}
\label{eq: reciprocal consequence general}
\underset{n \to \infty}{\lim}\sum_{0 = i_0 <i_1 < \cdots < i_r \le \ell_n} \frac{1}{p^{(v(n)_{i_r} + \cdots + v(n)_{i_1}) - (h(n)_{i_r} + \cdots + h(n)_{i_1})}} = 0.
\end{equation}
Let $G$ be a finite abelian $p$-group with $|G| = p^m$. 
By Lemma \ref{lem: d moment error term}, it is enough to prove that 
$$
\underset{n \to \infty}{\lim} \sum_{\substack{G_1 \le \cdots \le G_{\ell_n} \\ G_1 \ne G}}  d_{G_1^{h(n)_{1}},~G_2^{h(n)_{2}-h(n)_{1}},~\ldots~,~ G_{\ell_n}^{h(n)_{\ell_n} - h(n)_{\ell_{n}-1}},~ G^{n- h(n)_{\ell_n}}} = 0.
$$
(Here, the superscript $G_i^{t}$ indicates that the subgroup $G_i$ is repeated $t$ times).
Consider the set 
$$
\mr{CS}_G := \lt \{ (H_1, \ldots, H_{r+1}) \mid 1 \le r \le m \text{ and } H_1 \lneq H_2 \lneq \cdots \lneq H_{r+1} = G \rt \}. 
$$

Let  $(H_1, \ldots, H_{r+1}) \in \mr{CS}_G$ and $\frac{|G|}{|H_{a}|} = p^{m_a}$ for $1\le a \le r+1$, so $0 = m_{r+1} < m_r < \cdots < m_1$. For every $0 = i_0 < i_1 < \cdots < i_r \le \ell_n$, it follows that 
\begin{align*}
\prod_{a=1}^r \lt(\frac{|H_a|}{|G|}\rt)^{(v(n)_{i_a} - v(n)_{i_{a-1}} )- (h(n)_{i_a}- h(n)_{i_{a-1}})} & = 
\prod_{a=1}^r \lt(\frac{1}{p^{m_a}}\rt)^{(v(n)_{i_a} - h(n)_{i_a})- (v(n)_{i_{a-1}}- h(n)_{i_{a-1}})} \\
& = \frac{1}{p^{\sum_{a=1}^r (m_a-m_{a+1})(v(n)_{i_a}-h(n)_{i_a})}} \\
& \le \frac{1}{p^{\sum_{a=1}^r (v(n)_{i_a}-h(n)_{i_a})}}. 
\end{align*}
Then we have
\begin{align*}
& \sum_{\substack{G_1 \le \cdots \le G_{\ell_n} \\ G_1 \ne G}}  d_{G_1^{h(n)_{1}},~G_2^{h(n)_{2}-h(n)_{1}},~\ldots~,~ G_{\ell_n}^{h(n)_{\ell_n} - h(n)_{\ell_{n}-1}},~ G^{n- h(n)_{\ell_n}}}  \\
= & \sum_{(H_1, \ldots, H_{r+1}) \in \mr{CS}_G} \sum_{0 = i_0 < \cdots < i_{r} \le \ell_n} \frac{ | \lt \{ F \in \Sur(\zp^n, G) \mid FV_{\sg_{n,k}} = H_a \text{ if } h(n)_{i_{a-1}} < k \le h(n)_{i_a} \rt \} |}{ | H_1 |^{h(n)_{i_1}} | H_2 |^{h(n)_{i_2}-h(n)_{i_1}} \cdots | H_{r} |^{h(n)_{i_{r}}-h(n)_{i_{r-1}}} |G|^{n-h(n)_{i_r}}} \\
\le & \sum_{(H_1, \ldots, H_{r+1}) \in \mr{CS}_G} \sum_{0 = i_0 < \cdots < i_{r} \le \ell_n} 
\frac{ | H_1 |^{v(n)_{i_1}} | H_2 |^{v(n)_{i_2}-v(n)_{i_1}} \cdots | H_{r} |^{v(n)_{i_r}-v(n)_{i_{r-1}}} |G|^{n- v(n)_{i_r}}}{ | H_1 |^{h(n)_{i_1}} | H_2 |^{h(n)_{i_2}-h(n)_{i_1}} \cdots | H_{r} |^{h(n)_{i_{r}}-h(n)_{i_{r-1}}} |G|^{n-h(n)_{i_r}}} \\
= & \sum_{(H_1, \ldots, H_{r+1}) \in \mr{CS}_G} \sum_{0 = i_0 < \cdots < i_{r} \le \ell_n} \prod_{a=1}^{r}  \lt(\frac{|H_a|}{|G|}\rt)^{(v(n)_{i_a} - v(n)_{i_{a-1}} )- (h(n)_{i_a}- h(n)_{i_{a-1}})} \\
\le & | \mr{CS}_G | \sum_{r=1}^{m} \sum_{0 = i_0 < \cdots < i_{r} \le \ell_n}\frac{1}{p^{(v(n)_{i_r} + \cdots + v(n)_{i_1}) - (h(n)_{i_r} + \cdots + h(n)_{i_1})}}
\end{align*}
which converges to $0$ by \eqref{eq: reciprocal consequence general}. This proves (2) $\Longrightarrow$ (3). The implication (3) $\Longrightarrow$ (1) is trivial.
\end{proof}

As a special case of Theorem \ref{thm: stair main theorem}, we consider the stair-shaped zero regions where each step has height $1$ and width $d$. The following corollaries show that the converses of \cite[Theorems 3.3 and 3.4]{KLY24} hold. 

\begin{cor}
\label{cor: stair d=1}
Let $(t_n)_{n=1}^\infty$ be a sequence of positive integers such that $t_n \le n$ for each $n$. Let
$$
\sg_{n,i} = \left\{\begin{matrix}
[t_n+(i-1)] & (1 \le i \le n-t_n)\\ 
[n] & (i \ge n-t_n+1)
\end{matrix}\right.
$$
for each $n$ and $i$. Then the following are equivalent:
\begin{enumerate}
    \item $\underset{n \to \infty}{\lim} \bP(\det(\ol{X_n}) \ne 0) = c_p$,
    \item $\underset{n \to \infty}{\lim} (t_n - \log_p n) = \infty$,
    \item $\cok(X_n) \rightrightarrows \CL$.
\end{enumerate}
\end{cor}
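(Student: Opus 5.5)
The plan is to realize this support pattern as a special case of the stair-shaped pattern \eqref{eq: stair sigma} and apply Theorem \ref{thm: stair main theorem}. Then (1)$\Leftrightarrow$(3) is immediate, and the only remaining task is to show that condition (2) of Theorem \ref{thm: stair main theorem} is equivalent to $\lim_{n\to\infty}(t_n-\log_p n)=\infty$.

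First I identify the stair data. For $1\le i\le n-t_n$ the sets $\sg_{n,i}=[t_n+i-1]$ are strictly increasing, and $\sg_{n,i}=[n]$ for $i>n-t_n$. Two cases arise.
\begin{itemize}
\item If $t_n\ge 2$, each column $i\le n-t_n$ is its own step. So $\ell_n=n-t_n$, $h(n)_a=a$ and $v(n)_a=t_n+a-1$ for $1\le a\le \ell_n$. The constraints $h(n)_{\ell_n}<n$ and $v(n)_{\ell_n}=n-1<n$ hold.
\item If $t_n=1$, then $\sg_{n,n-1}=[n-1]$ and $\sg_{n,n}=[n]$, so the same description works.
\end{itemize}
One subtlety is that column $i=n-t_n+1$ has support $[n]$, which fits the last block in \eqref{eq: stair sigma}. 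In every case $v(n)_a-h(n)_a=t_n-1$ for all $a$, so
$$\sum_{i=1}^{\ell_n}\frac{1}{p^{v(n)_i-h(n)_i}}=\frac{n-t_n}{p^{t_n-1}}.$$

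It therefore suffices to show that $\frac{n-t_n}{p^{t_n-1}}\to 0$ if and only if $t_n-\log_p n\to\infty$.
\begin{itemize}
\item Suppose $t_n-\log_p n\to\infty$. Then $\frac{n-t_n}{p^{t_n-1}}\le \frac{pn}{p^{t_n}}=p\cdot p^{-(t_n-\log_p n)}\to 0$.
\item Conversely, suppose $\frac{n-t_n}{p^{t_n-1}}\to 0$. Split according to whether $t_n\le n/2$ or $t_n>n/2$.
\begin{itemize}
\item If $t_n\le n/2$, then $\frac{n}{2p^{t_n-1}}\le \frac{n-t_n}{p^{t_n-1}}$. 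So along such indices $p\,p^{-(t_n-\log_p n)}/2\to 0$.
\item If $t_n>n/2$, then $t_n-\log_p n>n/2-\log_p n\to\infty$ directly.
\end{itemize}
Combining the two subsequences gives $t_n-\log_p n\to\infty$.
\end{itemize}

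The main (mild) obstacle is this second direction, because $n-t_n$ could be small when $t_n$ is close to $n$; the case split handles it. A second point needing care is the boundary bookkeeping in the stair identification, namely that the column where the support first becomes $[n]$ is correctly absorbed into the final block. Once these are checked, the corollary follows from Theorem \ref{thm: stair main theorem}.
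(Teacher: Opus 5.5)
Your proposal is correct and matches the paper's proof: it identifies the pattern as the stair case with $\ell_n = n-t_n$, $h(n)_a = a$, $v(n)_a = t_n+a-1$, reduces condition (2) of Theorem \ref{thm: stair main theorem} to $\frac{n-t_n}{p^{t_n-1}} \to 0$, and concludes from that theorem. Your split into $t_n \le n/2$ versus $t_n > n/2$ usefully proves the elementary equivalence with $t_n - \log_p n \to \infty$, which the paper asserts without proof; the separate treatment of $t_n = 1$ is unnecessary, since the same stair data works for every $t_n$.
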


\begin{proof}
The subsets $\sg_{n,i}$ correspond to the case where $\ell_n = n-t_n$, $h(n)_i = i$ and $v(n)_i = t_n+i-1$. Since
$$
\underset{n \to \infty}{\lim} \sum_{i=1}^{\ell_n} \frac{1}{p^{v(n)_i - h(n)_i}} = \underset{n \to \infty}{\lim} \frac{n-t_n}{p^{t_n-1}} = 0
$$
if and only if $\underset{n \to \infty}{\lim} (t_n - \log_p n) = \infty$, the conclusion follows from Theorem \ref{thm: stair main theorem}. 
\end{proof}

\begin{cor}
\label{cor: stair d>1}
Let $d \ge 2$ be an integer. Let $(t_n)_{n=1}^\infty$ be a sequence of positive integers such that $t_n \le n$ for each $n$. Let
$$
\sg_{n,i} = \left\{\begin{matrix}
[t_n+(\lceil \frac{i}{d} \rceil -1)] & (1 \le i \le \min(d(n-t_n), n))\\ 
[n] & (i \ge d(n-t_n)+1)
\end{matrix}\right.
$$
for each $n$ and $i$. Then the following are equivalent:
\begin{enumerate}
    \item $\underset{n \to \infty}{\lim} \bP(\det(\ol{X_n}) \ne 0) = c_p$,
    \item $\underset{n \to \infty}{\lim} (n- d(n-t_n)) = \infty$,
    \item $\cok(X_n) \rightrightarrows \CL$.
\end{enumerate}
\end{cor}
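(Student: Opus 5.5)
The plan is to realize $\Sg_n$ as a special case of the stair-shaped pattern \eqref{eq: stair sigma}, read off $h(n)_a$ and $v(n)_a$, and then show that condition (2) of Theorem \ref{thm: stair main theorem} is equivalent to condition (2) here. The equivalences (1) $\Leftrightarrow$ (3) then follow from Theorem \ref{thm: stair main theorem}.

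\emph{Degenerate indices.} First I separate out the $n$ with $d(n-t_n) \ge n$ and $t_n<n$. For such $n$, the last column $n$ has support $[t_n+\lceil n/d\rceil-1]$. Since $\lceil n/d\rceil \le n-t_n$, this support is contained in $[n-1]$. The same holds for every other column, so row $n$ of $X_n$ is identically zero. Hence $\det(\ol{X_n})=0$ almost surely, and $\cok(X_n)$ has a free $\zp$-summand, so $\bP(\cok(X_n)\cong G)=0$ for every finite $G$. Also $n-d(n-t_n)\le 0$ for these $n$. Therefore, if infinitely many such $n$ occur, all three conditions fail along that subsequence. It thus suffices to consider $n$ with $d(n-t_n)<n$, together with the trivial case $t_n=n$, where $X_n$ is a full Haar matrix and everything holds.

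\emph{Identifying the stair data.} Now assume $d(n-t_n)<n$ and set $\ell_n = n-t_n$. For $1\le a\le \ell_n$, the columns $d(a-1)+1,\dots,da$ all have support $[t_n+a-1]$. The remaining columns have full support. So $\Sg_n$ is of the form \eqref{eq: stair sigma} with
\[
h(n)_a = da, \qquad v(n)_a = t_n+a-1 .
\]
Both sequences are strictly increasing, $h(n)_{\ell_n}=d(n-t_n)<n$, and $v(n)_{\ell_n}=n-1<n$, so the hypotheses of Theorem \ref{thm: stair main theorem} are met.

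\emph{Reducing the sum to its last term.} Next I compute
\[
\sum_{a=1}^{\ell_n} p^{-(v(n)_a-h(n)_a)} = \sum_{a=1}^{\ell_n} p^{-(t_n-1-(d-1)a)} .
\]
This is a geometric sum with ratio $p^{d-1}\ge 2$, since $d\ge 2$. So it is bounded below by its last term and above by $(1-p^{-(d-1)})^{-1}$ times its last term. The exponent of the last term ($a=\ell_n$) is
\[
t_n-1-(d-1)(n-t_n) = n-d(n-t_n)-1 .
\]
Hence the sum tends to $0$ if and only if $n-d(n-t_n)\to\infty$. This is exactly the equivalence of condition (2) of Theorem \ref{thm: stair main theorem} with (2) here.

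I expect the main obstacle to be the bookkeeping rather than any estimate. One point is to make sure the degenerate $n$ (where the last step of the stair would reach row $n$) are correctly absorbed. The other is to pass cleanly between a full sequence and its subsequences, since all three conditions are limit statements. A subsequence argument handles both: each condition holds for the full sequence if and only if it holds along every subsequence.
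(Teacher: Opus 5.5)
Your proposal is correct and follows the paper's proof. The paper likewise takes $\ell_n = n-t_n$, $h(n)_a = da$, $v(n)_a = t_n+a-1$, invokes Theorem \ref{thm: stair main theorem}, and observes that the geometric sum vanishes in the limit exactly when its last exponent $n-d(n-t_n)-1$ tends to infinity. Your separate treatment of indices with $d(n-t_n)\ge n$, where row $n$ of $X_n$ is identically zero, covers a case the paper's one-line proof passes over silently: there $h(n)_{\ell_n}=d(n-t_n)\ge n$ violates the hypotheses of the stair-shaped setup.
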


\begin{proof}
The subsets $\sg_{n,i}$ correspond to the case where $\ell_n = n-t_n$, $h(n)_i = di$ and $v(n)_i = t_n+i-1$. Since 
$$
\underset{n \to \infty}{\lim} \sum_{i=1}^{\ell_n} \frac{1}{p^{v(n)_i - h(n)_i}} = \underset{n \to \infty}{\lim} \sum_{i=1}^{n-t_n} \frac{1}{p^{t_n-(d-1)i-1}} = 0
$$
if and only if $\underset{n \to \infty}{\lim} (t_n-(d-1)(n-t_n)-1) = \infty$, the conclusion follows from Theorem \ref{thm: stair main theorem}. 
\end{proof}

We also point out that Corollary~\ref{cor: stair d>1}, or more precisely its transposed version, will be applied below in the proof of Theorem~\ref{thm: two stairs, height d width 1}.

\subsection{Band matrices} \label{Sub23}

We next consider band matrices. Let $(t_n)_{n=1}^\infty$ be a sequence of positive integers such that $t_n \le n$ for all $n$. For each $n$ and $i$, let $\sg_{n,i} = \{j \in [n] : |j-i| \le t_n\}$. The Haar-random matrix $X_n \in \M_n(\zp)$ supported on $\Sg_n = (\sg_{n,1}, \ldots, \sg_{n,n})$ is called the \emph{Haar-random band matrix of width $t_n$}. 
Using Theorem \ref{thm: Meszaros thm}, we prove that Conjecture \ref{conj: main conjecture} holds for Haar-random band matrices. 

\begin{thm}
\label{thm: band main theorem}
Let $X_n \in \M_n(\zp)$ be the Haar-random band matrix of width $t_n$. Then 
$$
\cok(X_n) \rightrightarrows \CL ~\Longleftrightarrow~ \underset{n \to \infty}{\lim}\bP(\det(\ol{X_n}) \ne 0) = c_p.
$$
\end{thm}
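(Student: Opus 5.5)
By Theorem \ref{thm: Meszaros thm}, it suffices to show that $\lim_{n\to\infty}\bP(\det(\ol{X_n})\ne 0)=c_p$ implies $\lim_{n\to\infty}(t_n-\log_p n)=\infty$. The reverse implication is immediate: if $\cok(X_n)\rightrightarrows\CL$, then convergence in rank $0$ holds, and convergence in rank $0$ is exactly the statement $\lim_{n\to\infty}\bP(\det(\ol{X_n})\ne0)=c_p$. I will prove the forward implication by contraposition. Suppose $t_n-\log_p n$ does not tend to infinity. Then there is a constant $C$ and a subsequence along which $t_n\le \log_p n + C$. Along this subsequence I want to show $\limsup \bP(\det(\ol{X_n})\ne0)<c_p$.

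The plan is to compare the band matrix with a stair-shaped matrix that has larger support, and then invoke monotonicity. The band support is contained in the upper stair support $\sg'_{n,i}=[\min(i+t_n,n)]$, that is, we drop the condition $j\ge i-t_n$. This is exactly the setting of Corollary \ref{cor: stair d=1} up to an index shift: the new $v(n)_i=t_n+i$ and $h(n)_i=i$ give $\sum_i p^{-(v_i-h_i)}\approx (n-t_n)p^{-t_n}$. Along our subsequence this sum stays bounded below by a positive constant, since $(n-t_n)p^{-t_n}\ge (n-t_n)/(np^{C})$, which tends to $p^{-C}$.

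The formula \eqref{eq: determinant stair} then shows that the nonsingularity probability for the stair matrix is at most
\[
c_p(n)\,\exp\Bigl(-\sum_i p^{-(v_i-h_i)}\Bigr)\big/ c_p(n)'
\]
in the explicit product form. More precisely, I would redo the bound in the proof of (1)$\Rightarrow$(2) of Theorem \ref{thm: stair main theorem}, comparing each factor $1-p^{k-v_i}$ to the corresponding Haar factor $1-p^{k-n}$. This gives a ratio bounded away from $1$, so
\[
\limsup_n \bP(\det(\ol{X'_n})\ne0)\le c_p\,e^{-\delta}<c_p
\]
for some $\delta>0$.

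The main obstacle is monotonicity: I need $\bP(\det\ol{X_n}\ne0)\le\bP(\det\ol{X'_n}\ne0)$ when the support of $X_n$ is contained in that of $X'_n$. This is the content announced for Section \ref{Sec4}, but it is not yet available at this point in the paper. If I have to prove it here, I would do so one entry at a time. Adding one random entry at position $(i,j)$ to a matrix $M$ whose entry there is $0$ replaces $\det M$ by $\det M + x\cdot\mathrm{cof}_{ij}$ with $x$ uniform in $\fp$. If the cofactor is nonzero, the new determinant is nonzero with probability $1-1/p$, which is at least as large as before only on average, so this naive argument needs care. A cleaner route is to condition on all the other columns: the determinant is a linear function of column $j$, and enlarging the support of column $j$ enlarges the space over which column $j$ is uniform. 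The nonvanishing probability of a nonzero linear functional is $1-1/p$ when it is nonzero on that space and $0$ otherwise, and the set where it is nonzero only grows as the space grows. Iterating over the columns gives monotonicity. Alternatively, I could avoid monotonicity altogether by computing the band nonsingularity probability directly via a column-by-column exposure argument, bounding each conditional probability by $1-p^{k-(\text{available rows})}$, but I expect the monotonicity route to be simpler.
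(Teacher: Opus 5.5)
Your proof is correct. It reaches the same upper bound as the paper, but by a different route.

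\textbf{How the paper does it.} The paper does not pass through the stair matrix or Section~\ref{Sec4}. It exposes the columns $x_1,\dots,x_n$ of $\ol{X_n}$ one at a time and uses the injection $W_i/(W_i\cap V_i)\hookrightarrow U_i/V_i$, where $U_i=\langle e_1,\dots,e_{d(i)}\rangle$ and $d(i)=\min(n,i+t_n)$. This bounds each conditional probability by $1-p^{i-1-d(i)}$. The resulting product $\bigl(1-p^{-(t_n+1)}\bigr)^{n-t_n}\prod_{j=1}^{t_n}(1-p^{-j})$ is exactly the nonsingularity probability of your upper stair matrix. So \eqref{eq: injection inequality} is in effect a hands-on proof of the one instance of monotonicity you need. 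The paper then finishes with a case split: first $t_n\to\infty$, then $(1-p^{-(t_n+1)})^{n-t_n}\to 1$.

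\textbf{How yours differs.} You invoke Lemma~\ref{lem: determinant nonzero inequality} and compare the stair factors with the Haar factors. Your displayed bound involving ``$c_p(n)'$'' is not meaningful as written. The factorwise comparison you then describe does work. Writing $P'$ for the stair probability, it gives $\log\bigl(P'/c_p(n)\bigr)\le -\bigl(n-t_n-1-\tfrac{2}{p-1}\bigr)p^{-(t_n+1)}$, hence $\limsup P'\le c_pe^{-\delta}$ along your subsequence with no case split. The key is to compare with $c_p(n)$ rather than with $1$.

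\textbf{Citing Section~\ref{Sec4}.} This is legitimate: that section does not depend on Section~\ref{Sec2}, and the paper itself forward-references Proposition~\ref{prop: analogue of proposition 3.2 of KLY24} for Theorem~\ref{thm: two stairs, height d width 1}. Your column-conditioning proof of monotonicity is also valid, and somewhat cleaner than the paper's row-based argument. Given the other columns, $\det$ is a linear functional of column $j$, and its restriction to $V_{\sg_{n,j}}$ can only become nonzero as the support grows.

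\textbf{What each route buys.} Yours is modular. Combining Proposition~\ref{prop: analogue of proposition 3.2 of KLY24} with Corollary~\ref{cor: stair d=1}, applied with $\min(t_n+1,n)$ in place of $t_n$, finishes at once with no subsequences or asymptotics. That is exactly how the paper treats two symmetric stairs. The paper's route is self-contained within Section~\ref{Sec2}.

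\textbf{Your fallback.} Your fallback exposure argument is the paper's proof. Note that the bound $1-p^{(i-1)-d(i)}$ there genuinely needs \eqref{eq: injection inequality}, because $x_i$ ranges only over the band rows $W_i$, and $V_i$ need not lie in $W_i$.
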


\begin{proof}
By Theorem \ref{thm: Meszaros thm}, it suffices to show that 
$$\underset{n \to \infty}{\lim} \bP(\det(\ol{X_n}) \ne 0) = c_p \quad \Longrightarrow \quad \underset{n \to \infty}{\lim} (t_n - \log_p n) = \infty.$$

Let $e_1, \ldots, e_n$ denote the standard basis of $\fp^n$. Let $x_i$ be the $i$-th column of $\ol{X_n}$ (viewed as a random element in $\fp^n$). 
For $1\le i \le n$, let $c(i) = \max(1, i-t_n)$ and $d(i) = \min(n, i+t_n)$. 
Define
\begin{align*}
U_i & := \langle e_1, e_2, \ldots, e_{d(i)} \rangle, \\
V_i & := \langle x_1, x_2, \ldots, x_{i-1} \rangle, \\
W_i & := \langle e_{c(i)}, e_{c(i)+1}, \ldots, e_{d(i)}\rangle.
\end{align*}
The natural inclusion $W_i/(W_i \cap V_i) \hookrightarrow U_i/V_i$ implies that
\begin{equation}
\label{eq: injection inequality}
\dim_{\fp}(W_i) - \dim_{\fp}(W_i \cap V_i ) \le \dim_{\fp}(U_i) - \dim_{\fp}(V_i) = d(i) - \dim_{\fp}(V_i).
\end{equation}
As in the proof of Theorem \ref{thm: stair main theorem}, we have
$$
\bP(\det(\ol{X_n}) \ne 0) = \prod_{i=1}^{n} \bP(x_i \notin V_i \mid \dim_{\fp}(V_i) = i-1 ).
$$
Since $x_i \in W_i$ for any $1\le i \le n$, we have
\begin{align*}
\bP(x_i \notin V_i \mid \dim_{\fp}(V_i) = i-1 ) & = \bP(x_i \in W_i\setminus (W_i \cap V_i) \mid \dim_{\fp}(V_i) = i-1 ) \\
& \le \left(1- \frac{p^{i-1}}{p^{d(i)}}\right),
\end{align*}
where the last inequality holds by \eqref{eq: injection inequality}. Hence
$$
\bP(\det(\ol{X_n}) \ne 0) \le \prod_{i=1}^n \left(1- \frac{p^{i-1}}{p^{d(i)}}\right) = \left(1-\frac{1}{p^{t_n+1}}\right)^{n-t_n}\prod_{j=1}^{t_n} \left(1-\frac{1}{p^j}\right).
$$
If $\underset{n \to \infty}{\lim} \bP(\det(\ol{X_n}) \ne 0) = c_p$, then 
$$
c_p \le \liminf_{n \to \infty}\left(1-\frac{1}{p^{t_n+1}}\right)^{n-t_n}\prod_{j=1}^{t_n} \left(1-\frac{1}{p^j}\right).
$$
Since $c_p> 0$, the expression on the right-hand side is bounded away from zero for large $n$.  If $t_n$ does not diverge to $\infty$, then along a subsequence where $t_n$ is bounded, we have $\left(1-\frac{1}{p^{t_n+1}}\right)^{n-t_n} \rightarrow 0$, which is a contradiction. 
Hence
$\prod_{j=1}^{t_n}(1-p^{-j})$ converges to $c_p$, and the above inequality forces
$$
\lim_{n\to\infty}\left(1-\frac{1}{p^{t_n+1}}\right)^{n-t_n}=1.
$$
Equivalently, $(n-t_n)/p^{t_n+1}\to 0$, and hence $t_n-\log_p n\to\infty$.
\end{proof}

\begin{rem}
In the proof of (1) $\Longrightarrow$ (2) in Theorem \ref{thm: Meszaros thm}, Mészáros introduced the notion of an $L$-localized kernel. He showed that if $t_n -\log_p n$ does not diverge to infinity as $n \to \infty$, then $\ol{X_n}$ has many localized vectors supported on disjoint blocks with high probability.
This implies that the limiting distribution of $\cok(\ol{X_n})$ has a heavier tail than the Cohen--Lenstra distribution modulo $p$; that is, there are infinitely many $n$ for which, for all sufficiently large $L$,
$$
\bP(\dim_{\fp} (\cok(\ol{X_n})) > L) \gg \sum_{m > L} \cl(m).
$$ 
The proof of Theorem \ref{thm: band main theorem} provides a simpler direct proof of the implication (1) $\Longrightarrow$ (2) of Theorem \ref{thm: Meszaros thm}. Indeed, if $t_n - \log_p n$ does not diverge to infinity as $n \to \infty$, then $\bP(\cok(\ol{X_n}) = 0)$ does not converge to $c_p$, so $\cok(X_n)$ cannot converge to CL.
\end{rem}

\subsection{Two symmetric stair-shaped zero regions} \label{Sub24}

A band matrix in Section \ref{Sub23} can be viewed as a matrix whose zero entries form two symmetric stair-shaped regions, with each step having height $1$ and width $1$. This can be generalized to the case where the zero entries form two symmetric stair-shaped regions, with each step having height $d \ge 2$ and width $1$. The case with height $1$ and width $d \ge 2$ is essentially the same, since the cokernel of a square matrix is isomorphic to that of its transpose (cf. the proof of \cite[Lemma 7.17]{KLY24}). We adopt the case with height $d \ge 2$ and width $1$, as it is more convenient for computing the moments. Let $(t_n)_{n=1}^{\infty}$ be a sequence of nonnegative integers such that $dt_n \le n$ for all $n$. Define $\Sg_n=(\sg_{n,1}, \ldots, \sg_{n,n})$ by 
\begin{equation}\label{eq: two stairs sigma}
\sg_{n, i} = \begin{cases}
[n-d(t_n-i+1)] & ~\text{if $1 \le i \le t_n$,} \\
[n] & ~\text{if $t_n + 1 \le i \le n-t_n$,} \\
[n] \setminus [d(i-n+t_n)] & ~\text{if $n- t_n + 1 \le i \le n$}.
\end{cases}
\end{equation}
Since $dt_n \le n$, the index ranges $1 \le i \le t_n$ and $n-t_n+1 \le i \le n$ do not overlap.

\begin{figure}[ht]
\begin{equation*}
\begin{pmatrix}
* & * & * & * & 0 & 0 & 0\\ 
0 & * & * & * & 0 & 0 & 0\\ 
0 & * & * & * & * & 0 & 0\\ 
0 & 0 & * & * & * & 0 & 0\\ 
0 & 0 & * & * & * & * & 0\\ 
0 & 0 & 0 & * & * & * & 0\\ 
0 & 0 & 0 & * & * & * & *
\end{pmatrix}
\end{equation*}
\caption{A matrix $X_n \in \M_n(\zp)$ for $(n, d, t_n)=(7,2,3)$}.
\label{fig4}
\end{figure}

We first recall a result from \cite{KLY24}, which will be used in the proof of Theorem \ref{thm: two stairs, height d width 1}.

\begin{lem}\label{lem: Prop 3.2 in KLY24}
(\cite[Proposition 3.2]{KLY24})
Let $\Sg_n = (\sg_{n,1}, \ldots, \sg_{n,n})$ and $\Sg_n' = (\sg_{n,1}', \ldots, \sg_{n,n}')$. Suppose that for every $n \ge 1$ and $1\le i \le n$, $\sg_{n,i} \subseteq \sg_{n,i}'$. Let $X_n$ and $X_n'$ be Haar-random matrices in $\M_n(\zp)$ supported on $\Sg_n$ and $\Sg_n'$, respectively. Then
$$
\underset{n \to \infty}{\lim} \bE(\# \Sur(\cok(X_n), G)) = 1 \quad\Longrightarrow \quad \underset{n \to \infty}{\lim} \bE(\# \Sur(\cok(X_n'), G)) = 1.
$$
\end{lem}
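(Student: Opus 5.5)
The plan is a sandwich argument. I will bound $\bE(\#\Sur(\cok(X_n'),G))$ above by the corresponding moment for $X_n$ and below by a quantity that tends to $1$ for any support pattern.

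First I use the formula derived above:
$$
\bE(\#\Sur(\cok(X_n'),G)) = \sum_{F\in\Sur(\zp^n,G)} \prod_{i=1}^n \frac{1}{|FV_{\sg'_{n,i}}|}.
$$
Since $\sg_{n,i}\subseteq\sg'_{n,i}$, we have $V_{\sg_{n,i}}\subseteq V_{\sg'_{n,i}}$, hence $FV_{\sg_{n,i}}\le FV_{\sg'_{n,i}}\le G$ for every $F$ and every $i$. Comparing the sums term by term over the same index set $\Sur(\zp^n,G)$ gives
$$
\frac{\#\Sur(\zp^n,G)}{|G|^n}\;\le\;\bE(\#\Sur(\cok(X_n'),G))\;\le\;\bE(\#\Sur(\cok(X_n),G)).
$$
The lower bound holds because every factor $1/|FV_{\sg'_{n,i}}|$ is at least $1/|G|$.

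Next I evaluate the lower bound. Write $r=\mr{rk}_p(G)$. A homomorphism $\zp^n\to G$ is surjective if and only if its reduction $\fp^n\to G/pG\cong\fp^r$ is surjective, by Nakayama's lemma. Therefore
$$
\frac{\#\Sur(\zp^n,G)}{|G|^n}=\frac{\#\Sur(\fp^n,\fp^r)}{p^{rn}}=\prod_{j=n-r+1}^{n}(1-p^{-j}),
$$
which tends to $1$ as $n\to\infty$.

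Finally, the hypothesis says the upper bound tends to $1$, so the squeeze gives $\lim_{n\to\infty}\bE(\#\Sur(\cok(X_n'),G))=1$. The only step requiring care is the count of surjections in the lower bound, and that count is standard. An alternative route is through Lemma~\ref{lem: d moment error term}, showing that the error sum of $d$'s for $\Sg_n'$ is dominated by that for $\Sg_n$. However, that domination is not literally term-by-term, since the subgroup tuples get reindexed. The direct sandwich avoids this issue, so I would use it.
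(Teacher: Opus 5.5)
Your sandwich argument is correct and complete. The pointwise inequalities $|FV_{\sg_{n,i}}|\le|FV_{\sg'_{n,i}}|\le|G|$ in the moment formula give
\[
\frac{\#\Sur(\zp^n,G)}{|G|^n}\le\bE(\#\Sur(\cok(X_n'),G))\le\bE(\#\Sur(\cok(X_n),G)).
\]
Your value $\prod_{j=n-r+1}^{n}(1-p^{-j})$ for the left side is also right. The equality of ratios needs, besides Nakayama, that each map in $\mathrm{Hom}(\fp^n,G/pG)$ has exactly $|pG|^n$ lifts to $\mathrm{Hom}(\zp^n,G)$; you left this implicit.

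The paper gives no proof of this lemma; it imports it from \cite[Proposition 3.2]{KLY24}. Your monotonicity argument is the natural self-contained proof from the moment formula in Section~\ref{Sub21}.

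Your worry about the alternative route through Lemma~\ref{lem: d moment error term} is unfounded. Write each error sum as a sum over $F\in\Sur(\zp^n,G)$ rather than over subgroup tuples, and the domination becomes term-by-term. Any $F$ with some $FV_{\sg'_{n,i}}\ne G$ also has $FV_{\sg_{n,i}}\ne G$, and its summand $\prod_i|FV_{\sg'_{n,i}}|^{-1}$ is at most $\prod_i|FV_{\sg_{n,i}}|^{-1}$. So both routes rest on the same inequality.
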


The proof of the following theorem relies on Proposition \ref{prop: analogue of proposition 3.2 of KLY24}, which will be proved in Section \ref{Sec4}.

\begin{thm} \label{thm: two stairs, height d width 1}
Let $X_n \in \M_n(\zp)$ be the Haar-random matrix supported on $\Sg_n = (\sg_{n,1}, \ldots, \sg_{n,n})$, where $\sg_{n,i}$ are given as in \eqref{eq: two stairs sigma}. Then the following are equivalent:
\begin{enumerate}
    \item $\underset{n \to \infty}{\lim}\bP(\det(\ol{X_n}) \ne 0)=c_p$,
    \item $\underset{n \to \infty}{\lim} (n-dt_n) = \infty$,
    \item $\cok(X_n) \rightrightarrows \CL$.
\end{enumerate}
\end{thm}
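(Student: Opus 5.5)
We will prove the cycle (1) $\Rightarrow$ (2) $\Rightarrow$ (3) $\Rightarrow$ (1). The implication (3) $\Rightarrow$ (1) is immediate, since convergence to CL implies convergence to CL in rank $0$, and $\bP(\cok(X_n)=0)=\bP(\det(\ol{X_n})\ne 0)$.

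For (1) $\Rightarrow$ (2), we will bound $\bP(\det(\ol{X_n})\ne 0)$ from above, in the same spirit as the proofs of Theorem \ref{thm: stair main theorem} and Theorem \ref{thm: band main theorem}. The first $t_n$ columns of $\ol{X_n}$ are supported in rows $[n-dt_n+d(i-1)]$ for column $i$, and the last $t_n$ columns are supported away from the top rows. A cleaner route is the monotonicity result Proposition \ref{prop: analogue of proposition 3.2 of KLY24}, which says that $\bP(\det(\ol{X_n})\ne 0)$ increases as the support grows. We enlarge $\Sg_n$ by filling in the lower-right stair, setting $\sg'_{n,i}=[n]$ for $i>n-t_n$. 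This gives a one-sided stair pattern with $h(n)_a=a$ and $v(n)_a=n-d(t_n-a+1)$ for $1\le a\le t_n$.

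Theorem \ref{thm: stair main theorem} then gives an explicit product formula \eqref{eq: determinant stair} for this larger pattern. The last factor of that product involves column $t_n$, whose support has size $n-d$, against $t_n-1$ previous columns. More usefully, we read off the whole product: the $a$-th factor is $1-p^{a-1-(n-dt_n+d(a-1))}$, so the product is at most $\prod_{a}(1-p^{-(n-dt_n)-(d-1)(a-1)})$. These terms combine with the $c_p(n)$-type tail. If $n-dt_n$ stayed bounded along a subsequence, the $a=1$ factor $(1-p^{-(n-dt_n)})$ alone would be bounded away from $1$. Then $\limsup \bP \le (1-p^{-M})\cdot\limsup(\text{rest}) < c_p$, because the remaining columns give a factor at most of order $c_p$. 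The details need care: we must check that the remaining factors are $\le \prod_{k\ge 1}(1-p^{-k})$ up to $1+o(1)$, which follows as in the proof of Theorem \ref{thm: stair main theorem}. This contradicts (1), so (2) follows. Alternatively, $t_n\ge 1$ together with Corollary \ref{cor: stair d>1} in transposed form could give this directly.

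For (2) $\Rightarrow$ (3), we use Lemma \ref{lem: Prop 3.2 in KLY24} to pass to a smaller support whose moments we know converge to $1$. Replace each column with $i>n-t_n$ by a support that is still a subset, but arranged so the pattern becomes a single stair of the transposed type in Corollary \ref{cor: stair d>1}. Concretely, we shrink the upper-left stair region or split the matrix into two blocks so each is a stair with height $d$ and width $1$ and gap $n-dt_n\to\infty$. Since the cokernel is invariant under transpose and under row and column permutations, Corollary \ref{cor: stair d>1} in transposed form gives moment convergence for the subpattern. Lemma \ref{lem: Prop 3.2 in KLY24} transfers this to $X_n$, and Theorem \ref{thm: Wood robust uniqueness theorem} concludes.

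The main obstacle is choosing a subpattern contained in $\Sg_n$ that is a genuine (transposed) stair with divergent slack. One option is to intersect with a block structure, for example zeroing out one of the two corners entirely. The worry is that this might cost too much, so the slack must be computed carefully.
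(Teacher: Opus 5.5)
Your arguments for (3) $\Rightarrow$ (1) and (1) $\Rightarrow$ (2) are correct and essentially the paper's. You enlarge to $\sg'_{n,i}=[n]$ for $i>n-t_n$, apply Proposition~\ref{prop: analogue of proposition 3.2 of KLY24}, and read off the one-sided stair: the factor $1-p^{-(n-dt_n)}$ times $c_p(n-t_n)\to c_p$ forces $n-dt_n\to\infty$. The paper instead cites Corollary~\ref{cor: stair d>1} in transposed form.

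The gap is in (2) $\Rightarrow$ (3): the single-stair subpattern you want to feed into Lemma~\ref{lem: Prop 3.2 in KLY24} does not exist in the regime that matters.
\begin{itemize}
\item Condition (2) allows $n=dt_n+s_n$ with $s_n\to\infty$ slowly, so $n\le 2dt_n$. The paper in fact reduces to $n<2dt_n$, since enlarging $t_n$ only shrinks the support.
\item In this regime column $1$ is supported on $[n-dt_n]$ and column $n$ on $[n]\setminus[dt_n]$, and these are disjoint. Any subpattern whose column supports form a chain (which is what Theorem~\ref{thm: stair main theorem} or Corollary~\ref{cor: stair d>1} needs, up to permutations) must then have one of these two columns empty. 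So $\det(\ol{X_n})=0$ almost surely.
\item Transposing does not help. Row $r$ lies in every $\sg_{n,i}$ only if $dt_n<r\le n-dt_n$, which is impossible here. So if the row supports of a subpattern form a chain, the largest one misses some column, and that column is identically zero.
\item Splitting into two diagonal blocks fails for a different reason. For a block-triangular subpattern with square diagonal blocks of sizes $k,n-k\ge 1$, one has $\bP(\det(\ol{X})\ne 0)\le c_p(k)\,c_p(n-k)$. Its $\limsup$ is at most $(1-p^{-1})c_p<c_p$, so such a subpattern cannot have moments tending to $1$.
\end{itemize}
Hence Lemma~\ref{lem: Prop 3.2 in KLY24} plus single-stair results cannot close the argument; the two stairs genuinely interact.

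The missing idea is to let the decoupling happen in the moment bound rather than in the support pattern.
\begin{itemize}
\item The paper first reduces to $n<2dt_n$ and trims the right-stair supports to multiples of $d$.
\item It then partitions the rows into disjoint blocks. Apart from the first and last blocks, each has size $d$ and lies in $\sg''_{n,i}\cap\sg''_{n,j}$ for one left-stair column $i$ and one right-stair column $j$ with $\sg''_{n,i}\cup\sg''_{n,j}=[n]$.
\item For such pairs $G_i+G_j=G$, so $|G_i\cap G_j|=|G_i||G_j|/|G|$.
\item Bounding $\#\{F\}$ row by row by these intersections makes the bound on $d''_{G_1,\ldots,G_n}$ factor as a left-stair term times a right-stair term. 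Each term has slack $ds_n$, where $s_n=\lf n/d-t_n-1\rf\to\infty$.
\item Summing over the chains $G_1\le\cdots\le G_{t_n}$ and $G_n\le\cdots\le G_{n-t_n+1}$ then gives moments tending to $1$.
\end{itemize}
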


\begin{proof}
The implication (3) $\Longrightarrow$ (1) is trivial.
If condition (1) is true, then Proposition \ref{prop: analogue of proposition 3.2 of KLY24} implies that $\underset{n \to \infty}{\lim}\bP(\det(\ol{X'_n}) \ne 0)=c_p$ where $X'_n \in \M_n(\zp)$ is the Haar-random matrix supported on $\Sg'_n=(\sg'_{n,1}, \ldots, \sg'_{n,n})$ given by $\sg'_{n,i} = \sg_{n,i}$ for $1 \le i \le t_n$ and $\sg'_{n,i}=[n]$ for $t_n+1 \le i \le n$. By Corollary \ref{cor: stair d>1} (applied to the transpose of $X'_n$), we deduce that $\underset{n \to \infty}{\lim} (n-dt_n) = \infty$. (Note that $t_n$ in this theorem corresponds to $n-t_n$ in Corollary \ref{cor: stair d>1}.)

Now we prove that (2) implies $\underset{n \to \infty}{\lim}\bE(\# \Sur(\cok(X_n), G)) = 1$ for every finite abelian $p$-group $G$. Applying Theorem \ref{thm: Wood robust uniqueness theorem}, this proves (3).
By Lemma \ref{lem: Prop 3.2 in KLY24}, we may enlarge $t_n$ (if necessary) so that $n < 2dt_n$ for every $n$ while still ensuring that $\underset{n \to \infty}{\lim} (n - dt_n) = \infty$.
First, we slightly reduce the subsets $\sg_{n,i}$ to simplify the computation of the moments. 
Let $r_n$ be the remainder when $n$ is divided by $d$, $\sg''_{n,i} = \sg_{n,i}$ for $1 \le i \le n-t_n$ and $\sg''_{n,i}=[n] \setminus [d(i-n+t_n)+r_n]$ for $n-t_n+1 \le i \le n$. By Lemma \ref{lem: Prop 3.2 in KLY24}, it is enough to prove $\underset{n \to \infty}{\lim}\bE(\# \Sur(\cok(X''_n), G)) = 1$ where $X''_n \in \M_n(\zp)$ is the Haar-random matrix supported on $\Sg''_n=(\sg''_{n,1}, \ldots, \sg''_{n,n})$. This modification makes the number of supporting entries in each of the relevant rightmost columns a multiple of $d$, so that these entries can be grouped into blocks of size $d$ in the computation below.

For a finite abelian $p$-group $G$, let $F \in \Sur(\zp^n, G)$ and $FV_{\sg''_{n,i}}=G_i$ for $1 \le i \le n$. Then we have
\begin{itemize}
    \item $G_1 \le G_2 \le \cdots \le G_{t_n} \le G$, 
    \item $G_{t_n+1}=G_{t_n+2}=\cdots=G_{n-t_n}=G$, and
    \item $G \ge G_{n-t_n+1} \ge G_{n-t_n+2} \ge \cdots \ge G_n$.
\end{itemize}
Define $d''_{G_1, \ldots, G_n}$ as in Section \ref{Sub21}, except that $\sg_{n,i}$ is replaced by $\sg''_{n,i}$. For $s_n := \lf \frac{n}{d}-t_n-1 \rf$, we have $n=d(t_n+s_n+1)+r_n$ 
where the assumption $n<2dt_n$ implies $s_n < t_n$. 
Since $\underset{n \to \infty}{\lim} s_n = \infty$ by condition (2), we may assume that $n$ is sufficiently large so that $s_n \ge 1$. Then 
\begin{alignat*}{3}
   &\sg''_{n,1} \setminus \sg''_{n,n-t_n+1}   &\;=\;& [d+r_n], \\
   &\sg''_{n,1} \cap (\sg''_{n,n-t_n+k}\setminus \sg''_{n,n-t_n+k+1})
   &\;=\;& [d(k+1)+r_n]\setminus [dk+r_n]   &\quad& \text{for } 1 \le k \le s_n,\\
   &\sg''_{n,k} \cap \sg''_{n,n-(t_n-s_n+1)+k}
   &\;=\;& [d(s_n+k)+r_n]\setminus [d(s_n+k-1) + r_n]   &\quad& \text{for } 2 \le k \le t_n-s_n,\\
   &(\sg''_{n,k}\setminus \sg''_{n,k-1}) \cap \sg''_{n,n} 
   &\;=\;& [d(s_n+k)+r_n]\setminus [d(s_n+k-1) + r_n]   &\quad& \text{for } t_n-s_n+1 \le k \le t_n.
\end{alignat*}
Note that this collection of sets is pairwise disjoint.
Then it follows that
\begin{equation} \label{eq29a}
\begin{split}
\sum_{\substack{(G_1, \ldots, G_n) \\ \ne (G, \ldots, G)}} d''_{G_1, \ldots, G_n}
& = \sum_{\substack{G_1 \le G_2 \le \cdots \le G_{t_n} \\ G_n \le G_{n-1} \le \cdots \le G_{n-t_n+1} \\ (G_1, G_n) \ne (G,G)}} \frac{\# \left \{ F \in \Sur(\zp^n, G) \mid FV_{\sg''_{n,i}}=G_i \text{ for } 1 \le i \le n
 \right \}}{|G_1| \cdots |G_{t_n}| \cdot |G_{n-t_n+1}| \cdots |G_n| \cdot |G|^{n-2t_n}} \\
& \le \sum_{\substack{G_1 \le G_2 \le \cdots \le G_{t_n} \\ G_n \le G_{n-1} \le \cdots \le G_{n-t_n+1} \\ (G_1, G_n) \ne (G,G)}} \frac{|G_1|^{d+r_n} (A_1A_2A_3)^d |G_n|^d}{|G_1| \cdots |G_{t_n}| \cdot |G_{n-t_n+1}| \cdots |G_n| \cdot |G|^{n-2t_n}}
\end{split}
\end{equation}
where
\begin{align*}
A_1 &:=  \prod_{k=1}^{s_n} |G_1 \cap G_{n-t_n+k}|, \\
A_2 &:= \prod_{k=2}^{t_n-s_n} |G_k \cap G_{n-(t_n-s_n+1)+k}|, \\
A_3 &:= \prod_{k=t_n-s_n+1}^{t_n} |G_k \cap G_n|.
\end{align*}

If $1 \le i \le t_n$, $n-t_n+1 \le j \le n$ and
$j-i \le n-(t_n-s_n+1)$, then $\sg''_{n,i} \cup \sg''_{n,j} = [n]$. In this case, 
$G_i+G_j = FV_{\sg''_{n,i}}+FV_{\sg''_{n,j}} = G$ so 
$$
|G_i \cap G_j| = \frac{|G_i||G_j|}{|G_i+G_j|} = \frac{|G_i||G_j|}{|G|}.
$$
Letting $B_1 := \prod_{k=1}^{t_n} |G_k|$ and $B_2 := \prod_{k=n-t_n+1}^{n} |G_k|$, this implies that
\begin{equation} \label{eq29b}
\begin{split}
& \frac{|G_1|^{d+r_n} (A_1A_2A_3)^d |G_n|^d}{|G_1| \cdots |G_{t_n}| \cdot |G_{n-t_n+1}| \cdots |G_n| \cdot |G|^{n-2t_n}} \\
= \, & \frac{|G_1|^{d+r_n}|G_n|^d}{B_1B_2 |G|^{n-2t_n}} \lt( \frac{|G_1|^{s_n-1} B_1 B_2 |G_n|^{s_n-1}}{|G|^{t_n+s_n-1}}\rt)^d \\
= \, & \frac{|G_1|^{ds_n+r_n} B_1^{d-1}}{|G|^{ds_n+r_n+(d-1)t_n}} 
\cdot \frac{|G_n|^{ds_n} B_2^{d-1}}{|G|^{ds_n+(d-1)t_n}} \\
\le \, & \frac{|G_1|^{ds_n} B_1^{d-1}}{|G|^{ds_n+(d-1)t_n}} 
\cdot \frac{|G_n|^{ds_n} B_2^{d-1}}{|G|^{ds_n+(d-1)t_n}}.
\end{split}
\end{equation}

Let $|G|=p^m$. Suppose that
$$
G_1=\cdots=G_{i_1} \lneq G_{i_1+1}=\cdots=G_{i_2} \lneq \cdots = G_{i_r} \lneq G_{i_r+1}=\cdots=G_{t_n+1} = G
$$
for some $1 \le i_1<\cdots<i_r \le t_n$, and
$$
G_n=\cdots=G_{n-j_1+1} \lneq G_{n-j_1}=\cdots=G_{n-j_2+1} \lneq \cdots = G_{n-j_s+1} \lneq G_{n-j_s}=\cdots=G_{n-t_n} = G
$$
for some $1 \le j_1<\cdots<j_s \le t_n$. Then
\begin{equation} \label{eq29c}
\frac{|G_1|^{ds_n} B_1^{d-1}}{|G|^{ds_n+(d-1)t_n}} 
\cdot \frac{|G_n|^{ds_n} B_2^{d-1}}{|G|^{ds_n+(d-1)t_n}} \le \frac{1}{p^{rds_n+(d-1)(i_1+\cdots+i_r)}} \cdot \frac{1}{p^{sds_n+(d-1)(j_1+\cdots+j_s)}}.
\end{equation}
Since $|G|=p^m$, we have $r,s \le m$. Moreover, the condition $(G_1, G_n) \ne (G,G)$ implies that at least one of $r$ and $s$ is nonzero.
Combining \eqref{eq29a}, \eqref{eq29b} and \eqref{eq29c}, we obtain
\begin{align*}
\sum_{\substack{(G_1, \ldots, G_n) \\ \ne (G, \ldots, G)}} d''_{G_1, \ldots, G_n} 
& \le O_G(1) \sum_{\substack{r+s \ge 1 \\ r,s \le m}} \sum_{\substack{1 \le i_1 < \cdots < i_r \le t_n \\ 1 \le j_1 < \cdots < j_s \le t_n}} \frac{1}{p^{rds_n+(d-1)(i_1+\cdots+i_r)}} \frac{1}{p^{sds_n+(d-1)(j_1+\cdots+j_s)}} \\
& \le O_G(1) \sum_{\substack{r+s \ge 1 \\ r,s \le m}} \frac{1}{p^{(r+s)ds_n}} \lt( \sum_{i=1}^{\infty} \frac{1}{p^{(d-1)i}} \rt)^{r+s} \\
& \le O_G \lt( \lt( 1+ \sum_{r=1}^{m} \frac{1}{p^{rds_n}} \rt) \lt( 1+\sum_{s=1}^{m} \frac{1}{p^{sds_n}} \rt)-1 \rt).
\end{align*}
Now the condition $\underset{n \to \infty}{\lim} s_n = \infty$ implies that
$$
\lim_{n \to \infty} \sum_{\substack{(G_1, \ldots, G_n) \\ \ne (G, \ldots, G)}} d''_{G_1, \ldots, G_n} = 0.
$$
By Lemma \ref{lem: d moment error term}, we have $\underset{n \to \infty}{\lim}\bE(\# \Sur(\cok(X''_n), G)) = 1$, which completes the proof.
\end{proof}

\begin{rem}
Let $(R, \mf{m})$ be a complete discrete valuation ring with finite residue field $R/\mf{m} = \fq$, where $\fq$ denotes the finite field with $q$ elements. Let $\mathrm{Mod}_R$ be the set of isomorphism classes of finite $R$-modules. Let $(Y_n)_{n=1}^{\infty}$ be a sequence of random finitely generated $R$-modules.
Extending our notation, we write $Y_n \rightrightarrows \CL$ if for every $H \in \mathrm{Mod}_R$,
$$
\lim_{n \to \infty} \bP(Y_n \cong H) = \frac{1}{\#\Aut_R(H)}\prod_{i=1}^\infty (1- q^{-i}). 
$$
Here, $\Aut_R(H)$ denotes the set of $R$-module automorphisms of $H$. 
Then all results in this section remain valid when $\zp$ and $p$ are replaced by $R$ and $q$, respectively. To apply the moment method in this setting, we require the results of \cite{SW22}, in particular \cite[Lemma 6.9]{SW22}.
\end{rem}

\subsection{Extension to \texorpdfstring{$n \times (n+u)$}{n by (n+u)} matrices} \label{Sub25}

Let $u$ be a nonnegative integer. In this section, we extend our results to $n\times (n+u)$ rectangular matrices.

\begin{thm}[Wood, {\cite[Section 3]{Woo19}}]\label{thm: Wood robust uniqueness theorem2}
Let $\mc{P}$ be the set of all finite abelian $p$-groups and $Y_1, Y_2, \ldots$ be a sequence of random finitely generated $\zp$-modules. Suppose that
$$
\underset{n \to \infty}{\lim}\bE(\#\Sur(Y_n, H)) = \frac{1}{|H|^u}
$$
for all $H \in \mc{P}$. Then for every $G \in \mc{P}$,
$$
\underset{n \to \infty}{\lim} \bP(Y_n \cong G) = \frac{1}{\#\Aut(G)|G|^u}\prod_{i=1}^\infty (1-p^{-u-i}).
$$
\end{thm}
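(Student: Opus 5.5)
The plan is to reduce the statement to two ingredients: an explicit moment computation for the claimed limit law, and a robust moment-uniqueness principle. Let $\nu_u$ denote the probability measure on $\mc{P}$ given by
$$
\nu_u(G) = \frac{1}{\#\Aut(G)|G|^u}\prod_{i=1}^\infty (1-p^{-u-i}).
$$
The first step is to show that $\nu_u$ is a probability measure with $\sum_{G} \nu_u(G)\,\#\Sur(G,H) = |H|^{-u}$ for every $H \in \mc{P}$.

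Rather than summing the series directly, I will realize $\nu_u$ as a limit law. Let $A_n$ be a Haar-random $n\times(n+u)$ matrix over $\zp$. For $F \in \Sur(\zp^n, H)$, the vector $FA_n$ is Haar-distributed in $H^{n+u}$, so
$$
\bE(\#\Sur(\cok(A_n),H)) = \#\Sur(\zp^n,H)\,|H|^{-(n+u)} .
$$
Since $\#\Sur(\zp^n,H)/|H|^n \to 1$, this moment tends to $|H|^{-u}$. Separately, the classical Friedman--Washington computation (counting $n\times(n+u)$ matrices over $\Z/p^k\Z$ with prescribed cokernel, as in \cite{FW89} and \cite{CL84}) gives $\bP(\cok(A_n)\cong G) \to \nu_u(G)$. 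Passing the moment identity to the limit requires uniform integrability. This follows because $\#\Sur(G,H) \le |H|^{\mr{rk}_p(G)}$, and the moments for $H=\fp^k$ are bounded for all $k$, which controls the tails of $\mr{rk}_p(\cok(A_n))$. This yields the moments of $\nu_u$.

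Next I will establish tightness of $(Y_n)$. Take $H = \fp^k$. Then $\#\Sur(Y_n,\fp^k) \ge 1$ whenever $\mr{rk}_p(Y_n) \ge k$, so Markov's inequality gives $\bP(\mr{rk}_p Y_n \ge k) \le \bE\#\Sur(Y_n,\fp^k) \to p^{-uk}$. This is not quite enough on its own, since we also need to exclude infinite cokernels and large exponents. For that, take $H = \Z/p^j\Z$: a surjection onto $\Z/p^j\Z$ exists whenever $Y_n$ has a $\zp$ summand or a cyclic summand of order at least $p^j$, and the bounded moments $p^{-uj}$ bound these events. Together these give tightness on $\mc{P}\cup\{\text{infinite modules}\}$, and show that limit points put no mass on infinite modules. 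Every subsequential limit $\mu$ therefore exists, is supported on $\mc{P}$, and by the same uniform integrability argument (using the $H\oplus H$ moments as dominating functions) has moments exactly $|H|^{-u}$.

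The final step, which I expect to be the main obstacle, is uniqueness: a measure on $\mc{P}$ with moments $|H|^{-u} \le 1 = O(|\wedge^2 H|)$ is determined by them. I would invoke Wood's uniqueness theorem \cite[Theorem 8.3]{Woo17}, or its robust form in \cite{Woo19}, whose hypothesis is precisely this growth bound. If this had to be proved from scratch, I would follow Wood's method. One expands $\bP(Y\cong G)$ via Möbius inversion on the lattice of quotients of a finite group $p$-torsion level. This gives an alternating sum of moments, whose absolute convergence is guaranteed by the $|\wedge^2 H|$ bound through the Hall-type estimate $\sum_{H} \#\Sur(G,H)/|\wedge^2 H| < \infty$ weighted appropriately. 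Granting uniqueness, every subsequential limit equals $\nu_u$, so $\bP(Y_n\cong G)\to\nu_u(G)$ for all $G$, which is the statement.
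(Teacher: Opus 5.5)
The paper does not prove this statement; it quotes it from Wood. Your route is the standard way this result is obtained: compute the moments of $\nu_u$ through the Haar $n\times(n+u)$ model, prove tightness and identify the moments of subsequential limits, then apply Wood's uniqueness theorem for moments that are $O(|\wedge^2 H|)$. The moment computation $\bE(\#\Sur(\cok(A_n),H))=\#\Sur(\zp^n,H)\,|H|^{-(n+u)}\to|H|^{-u}$ and the final appeal to uniqueness are fine.

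\textbf{The tightness step fails as written when $u=0$.} The statement includes this case, and it is the version of Wood's theorem the paper actually uses in Section~2. You only use that $\#\Sur(Y_n,\fp^k)\ge 1$ on $\{\mr{rk}_p Y_n\ge k\}$. You likewise only use that some surjection onto $\Z/p^j\Z$ exists when $Y_n$ has a $\zp$ summand or a cyclic summand of order at least $p^j$. Markov's inequality then gives bounds $p^{-uk}+o(1)$ and $p^{-uj}+o(1)$, which are $1+o(1)$ for $u=0$ and prove nothing.

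The fix is to count the surjections on these events. First, $\#\Sur(Y,\fp)=p^{\mr{rk}_p Y}-1$. Second, on the exponent event, a surjection from that summand onto $\Z/p^j\Z$ extended by zero shows $\#\Sur(Y,\Z/p^j\Z)\ge p^j-p^{j-1}$. These give
\[
\bP(\mr{rk}_p Y_n\ge k)\le \frac{p^{-u}+o(1)}{p^k-1},\qquad \bP(\text{exponent event})\le\frac{p^{-uj}+o(1)}{p^j-p^{j-1}},
\]
and both tend to $0$ as $k,j\to\infty$ for every $u\ge 0$.

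Two smaller repairs are also needed.

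\emph{Total mass in Step 1.} A pointwise limit of probability measures on the countable set of isomorphism classes need not have total mass $1$. Controlling $\mr{rk}_p$ alone does not stop mass escaping to large exponent. To pass the moments to the limit, either quote the Cohen--Lenstra identity $\sum_G\nu_u(G)=1$, or run the corrected tightness argument on $\cok(A_n)$ itself; then Scheff\'e's lemma upgrades pointwise convergence to total-variation convergence.

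\emph{The dominating function.} For uniform integrability, use $\#\Hom(Y_n,H)^2=\#\Hom(Y_n,H\oplus H)=\sum_{K\le H\oplus H}\#\Sur(Y_n,K)$, whose expectation is bounded. The quantity $\#\Sur(Y_n,H\oplus H)$ alone does not dominate $\#\Sur(Y_n,H)^2$: it vanishes when $Y_n$ is cyclic and $H\neq 0$.
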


We define the collection of support patterns $\Sg_n = (\sg_{n,1}, \ldots, \sg_{n,n+u})$ in analogy with our definitions from Sections \ref{Sub22}, \ref{Sub23}, and \ref{Sub24}. More precisely, for each $i \in [n+u]$, we define $\sg_{n,i}$ according to one of the following cases:
\begin{enumerate}[(i)]
\item
\begin{equation*}
\sg_{n, i} = \begin{cases}
[v(n)_a] & \text{if } h(n)_{a-1} + 1 \le i \le h(n)_a \text{ for some } 1 \le a \le \ell_n, \\
[n] & \text{if } h(n)_{\ell_n} + 1 \le i \le n+u,
\end{cases}
\end{equation*}

\item
\begin{equation*}
\sg_{n,i} = \{j \in [n] : i-t_n-u \le j \le i+t_n\},
\end{equation*}

\item
\begin{equation*}
\sg_{n, i} = \begin{cases}
[n-d(t_n-i+1)] & \text{if } 1 \le i \le t_n, \\
[n] & \text{if } t_n + 1 \le i \le n+u-t_n, \\
[n] \setminus [d(i-n-u+t_n)] & \text{if } n+u- t_n + 1 \le i \le n+u.
\end{cases}
\end{equation*}
\end{enumerate}
Using Theorem \ref{thm: Wood robust uniqueness theorem2}, the arguments in the proofs of Theorems \ref{thm: stair main theorem}, \ref{thm: band main theorem}, and \ref{thm: two stairs, height d width 1} extend in a similar manner to yield the following result.
We omit the details.

\begin{thm}
Let $X_n \in \M_{n\times (n+u)}(\zp)$ be the Haar-random matrix supported on $\Sg_n = (\sg_{n,1}, \ldots, \sg_{n,n+u})$, where the supports $\sg_{n,i}$ are given by one of (i)--(iii) above. Then the following are equivalent. 
\begin{enumerate}
\item
For every finite abelian $p$-group $G$,
$$
\underset{n \to \infty}{\lim} \bP(\cok(X_n) \cong G) = \frac{1}{\#\Aut(G)|G|^u}\prod_{i=1}^\infty (1-p^{-u-i}).
$$
\item 
$$
\underset{n \to \infty}{\lim} \bP(\ol{X_n} \text{ has full rank $n$}) = \prod_{i=1}^\infty (1- p^{-u-i}).
$$
\end{enumerate}
\end{thm}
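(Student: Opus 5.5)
The plan is to run the three square-case arguments again with the square-matrix moment criterion replaced by its $u$-shifted version. For (1) $\Rightarrow$ (2) nothing new is needed: $\cok(\ol{X_n}) \cong \cok(X_n)\otimes \fp$, so $\ol{X_n}$ has full rank $n$ iff $\cok(X_n)=0$. By (1) with $G=0$ the limiting probability of this event is $\prod_{i\ge1}(1-p^{-u-i})$. The substance is (2) $\Rightarrow$ (1). I would split it into two steps. First, translate (2) into an explicit combinatorial condition on the pattern: $\lim\sum_i p^{-(v(n)_i-h(n)_i)}=0$ in case (i), $t_n-\log_p n\to\infty$ in case (ii), and $n-dt_n\to\infty$ in case (iii). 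Second, show that this condition forces $\bE(\#\Sur(\cok(X_n),H))\to |H|^{-u}$, and then invoke Theorem \ref{thm: Wood robust uniqueness theorem2}.

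\textbf{The moment step.} As in Section \ref{Sub21}, $\bE(\#\Sur(\cok X_n,G))=\sum_{F\in\Sur(\zp^n,G)}\prod_{i=1}^{n+u}|FV_{\sg_{n,i}}|^{-1}$. The term with all $FV_{\sg_{n,i}}=G$ contributes $\#\Sur(\zp^n,G)/|G|^{n+u}\to |G|^{-u}$. So the analogue of Lemma \ref{lem: d moment error term} (same proof) reduces everything to showing that the sum over non-full tuples $(G_1,\dots,G_{n+u})\neq(G,\dots,G)$ tends to $0$. For (i) and (iii), the $u$ extra full columns only add factors $|G|^{-1}$ whose count matches $|G|^{u}$ in the full term. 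Hence the bounds in the proofs of Theorems \ref{thm: stair main theorem} and \ref{thm: two stairs, height d width 1} go through verbatim, and Lemma \ref{lem: Prop 3.2 in KLY24} still allows enlarging supports. For (ii), I would not reprove Mészáros's theorem. Instead I would bound the error directly by deleting entries: the band of case (ii) contains two stair-shaped zero regions, one with step $1\times1$ at offset $t_n$ and one at offset $t_n+u$. A Theorem~\ref{thm: two stairs, height d width 1}-style disjoint-block estimate, with $d=1$ but with $(s_n,t_n)$ chosen so that the blocks fit, should give an error of order $\sum_{r,s\le m} n^{r+s}p^{-(r+s)t_n}$. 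This tends to $0$ when $t_n-\log_p n\to\infty$. This is the step where I am least certain, and alternatively I would cite a rectangular version of \cite{Mes24b}.

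\textbf{The translation step.} In case (i) I would redo \eqref{eq: determinant stair} column by column. The probability that $\ol{X_n}$ has rank $n$ is at most the probability that the first $h(n)_{\ell_n}$ columns are independent times the probability that the full columns complete the span. This is bounded by $\prod_i\prod_k(1-p^{k-v(n)_i})\cdot \bP(\text{an }(n-h)\times(n+u-h)\text{ Haar matrix has full rank})$. The second factor is at most $\prod_{i\ge1}(1-p^{-u-i})$, so (2) forces the first factor to tend to $1$, which gives the sum condition exactly as before. In case (ii) I would repeat the argument of Theorem \ref{thm: band main theorem}, using the filtration by columns $x_1,\dots,x_n$ (the first $n$ columns) together with the inequality \eqref{eq: injection inequality}. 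This yields the upper bound $(1-p^{-t_n-1})^{n-t_n}\prod_{j\le t_n}(1-p^{-j})$ for the first $n$ columns being independent. That forces $t_n\to\infty$ but only gives rank-$n$ for $n\times n$ minors, which is too weak. So I would instead bound the probability that the last $u$ columns fix a deficiency: a deficiency localized in a block at distance $>t_n+u$ from the right end cannot be repaired. Turning this into a clean inequality is the main obstacle. My first attempt would apply \eqref{eq: injection inequality} to the first $n-t_n-u$ columns only, where the added columns are irrelevant, obtaining $(1-p^{-t_n-1})^{n-2t_n-u}\to1$. In case (iii) I would apply the rectangular analogue of Proposition \ref{prop: analogue of proposition 3.2 of KLY24} (monotonicity in the support) to reduce to the one-stair pattern (i), and read off $n-dt_n\to\infty$ as in Corollary \ref{cor: stair d>1}.
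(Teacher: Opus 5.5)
Your direction (1)$\Rightarrow$(2) is fine, and so are your moment estimates for patterns (i) and (iii): the $u$ extra full columns multiply the main term and every error term by exactly $|G|^{-u}$. The gap is in translating (2) into the combinatorial conditions.

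You expose \emph{columns}, as in the square case. But for an $n\times(n+u)$ matrix, full rank is not a column-independence statement: the $u$ surplus columns can repair dependencies among earlier columns.

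\textbf{Case (i).} Your bound $\bP(\mathrm{rank}\,\ol{X_n}=n)\le\bP(\text{first }h(n)_{\ell_n}\text{ columns independent})\cdot\bP(\text{full columns complete the span})$ is false. Take $n=2$, $u=1$, $h(n)_1=v(n)_1=1$, so column $1$ is supported on row $1$ and columns $2,3$ are full. The left side is $(1-p^{-2})^2$, while the right side is $(1-p^{-1})(1-p^{-2})$. Moreover, the second factor $\prod_{j=u+1}^{n+u-h(n)_{\ell_n}}(1-p^{-j})$ is \emph{at least}, not at most, $\prod_{j\ge1}(1-p^{-u-j})$. So even a correct product bound would force the first factor to $1$ only after a separate proof that $n-h(n)_{\ell_n}\to\infty$; the paper supplies that step in the square case via $v(n)_i\le n-1$. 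Your treatment of (iii) reduces to (i) and inherits this gap.

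\textbf{Case (ii).} Your first attempt rests on the same false premise. For $u\ge1$, a zero first column does not prevent rank $n$: the $n\times n$ block on columns $2,\dots,n+1$ has random diagonal entries in positions $(i-1,i)$, so it is nonsingular with positive probability. Hence you cannot restrict attention to the first $n-t_n-u$ columns, and this step remains open, as you say.

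The missing idea is to expose \emph{rows}. $\ol{X_n}$ has rank $n$ iff its $n$ rows are independent, and rows can be added one at a time exactly as columns are in the square case.

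In (i), put $v(n)_{\ell_n+1}:=n$ and let $a(j)$ satisfy $v(n)_{a(j)-1}<j\le v(n)_{a(j)}$. Row $j$ is uniform on the coordinate subspace of columns $>h(n)_{a(j)-1}$, and this subspace contains all rows below it. Adding rows from the bottom gives the exact formula
\[
\bP(\mathrm{rank}\,\ol{X_n}=n)=\prod_{j=1}^{n}\bigl(1-p^{-(j+u-h(n)_{a(j)-1})}\bigr).
\]
Each factor is at most $1-p^{-(j+u)}$. For $j=v(n)_i+1$, the ratio to $1-p^{-(j+u)}$ is at most $1-\tfrac12p^{-(u+1)}p^{-(v(n)_i-h(n)_i)}$. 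Comparing with $\prod_{j=1}^n(1-p^{-(j+u)})$, condition (2) forces $\sum_i p^{-(v(n)_i-h(n)_i)}\to0$ directly, with no separate step.

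In (ii), add rows from the top. Let $U_j$ be spanned by the first $\min(n+u,j+t_n+u)$ coordinate vectors of $\fp^{n+u}$, let $V_j$ be the span of the first $j-1$ rows, and let $W_j$ be the support of row $j$. The inequality \eqref{eq: injection inequality} then yields
\[
\bP(\mathrm{rank}\,\ol{X_n}=n)\le\bigl(1-p^{-(t_n+u+1)}\bigr)^{n-t_n}\prod_{k=1}^{t_n}\bigl(1-p^{-(k+u)}\bigr),
\]
and the end of the proof of Theorem~\ref{thm: band main theorem} gives $t_n-\log_p n\to\infty$.

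Finally, your direct moment bound for (ii) is only heuristic. Band columns generically have zeros both above and below, so the groups $FV_{\sg_{n,i}}$ do not form the two monotone chains on which the proof of Theorem~\ref{thm: two stairs, height d width 1} relies. As in the paper's square case, that direction has to come from (a rectangular form of) M\'esz\'aros's theorem.
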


\section{Rank-\texorpdfstring{$r$}{r} convergence does not imply CL convergence} \label{Sec3}

In this section, we show that Conjecture \ref{conj: main conjecture} fails if the assumption of convergence to CL in rank $0$ is replaced by convergence to CL in rank $r$ for some positive integer $r$. More precisely, we construct a sequence of Haar-random matrices $X_n \in \M_n(\zp)$ supported on $\Sg_n$ for which $\cok(X_n)$ converges to CL in rank $r$, but does not converge to CL in rank $0$. 

\begin{lem} \label{lem_product}
For every real number $\alpha \in (0,1)$, there exists a sequence of positive integers $(j_k)_{k=1}^{\infty}$ such that 
$$
\prod_{k=1}^{\infty} \lt( 1 - \frac{1}{p^{j_k}} \rt) = \alpha.
$$
\end{lem}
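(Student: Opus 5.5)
Work with $-\log$: it suffices to find positive integers $j_1 < j_2 < \cdots$ (or a nondecreasing sequence, repetitions allowed) with $\sum_k -\log(1-p^{-j_k}) = \beta := -\log \alpha \in (0,\infty)$. Set $a_j := -\log(1-p^{-j})$ for $j \ge 1$. These are positive and decrease strictly to $0$, and $a_j \sim p^{-j}$. Since repetitions are allowed, a greedy construction works.

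Define the $j_k$ inductively. Let $S_0 = 0$. Given the partial sum $S_{k-1} < \beta$, choose $j_k$ to be the smallest positive integer $j \ge j_{k-1}$ (with $j_0 := 1$) such that $S_{k-1} + a_j < \beta$, and set $S_k = S_{k-1} + a_{j_k}$. Such a $j$ exists because $a_j \to 0$ and $\beta - S_{k-1} > 0$. The inequality $S_k < \beta$ is preserved, so the procedure never stops and produces an infinite nondecreasing sequence with $S_k$ increasing and bounded by $\beta$. The product converges to $\exp(-\lim S_k) \in [\alpha, 1)$.

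The main step is showing $\lim S_k = \beta$. Suppose instead $S_k \to S < \beta$, and let $\varepsilon = \beta - S > 0$.

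\emph{Case 1: $j_k$ is bounded.} Then $j_k$ is eventually constant, so $a_{j_k}$ is a fixed positive number added infinitely often. This forces $S_k \to \infty$, a contradiction.

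\emph{Case 2: $j_k \to \infty$.} Consider an index $k$ at which the value jumps, i.e. $j_k > j_{k-1}$, with $j_k \ge 2$. By minimality of $j_k$, the value $j = j_k - 1 \ge j_{k-1}$ failed, so $S_{k-1} + a_{j_k - 1} \ge \beta$, which gives $a_{j_k-1} \ge \beta - S_{k-1} \ge \varepsilon$. Since $j_k \to \infty$ there are infinitely many such jump indices, and along them $a_{j_k - 1} \to 0$. This contradicts $a_{j_k - 1} \ge \varepsilon$.

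Hence $\lim S_k = \beta$, and therefore $\prod_k (1 - p^{-j_k}) = \alpha$.

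The only subtle point is Case 2, where the greedy minimality condition has to be applied at the jump indices. If a strictly increasing sequence is preferred, this is not available in general: for $p = 2$, strict increase caps the sum at $\sum_j a_j$, which may be smaller than $\beta$. So I would allow repetitions, which matches the statement's "sequence of positive integers".
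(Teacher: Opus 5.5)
Your proof is correct and uses the same greedy construction as the paper. Choosing $j_k$ as the smallest integer with $S_{k-1}+a_{j}<\beta$ is exactly the paper's condition $1-p^{-(j_k-1)}\le \alpha_{k-1}<1-p^{-j_k}$ written additively, and your two cases match its argument that $j_k\to\infty$ (because $\sum_k p^{-j_k}$ is bounded) and that the failed value $j_k-1$ forces the remaining gap to vanish. The only difference is cosmetic: the paper notes that the constraint $j_k\ge j_{k-1}$ holds automatically, so it can use minimality at every step rather than only at jump indices.
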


\begin{proof}
Since $\alpha < 1$, there exists a unique positive integer $j_1$ such that 
$$
1-\frac{1}{p^{j_1-1}} \le \alpha < 1-\frac{1}{p^{j_1}}.
$$
Moreover, since 
$$
\frac{\alpha}{1-\frac{1}{p^{j_1}}}<1,
$$
there exists a unique positive integer $j_2 \ge j_1$ such that
$$
1-\frac{1}{p^{j_2-1}} \le \frac{\alpha}{1-\frac{1}{p^{j_1}}} < 1-\frac{1}{p^{j_2}}.
$$
In general, let 
$$
\alpha_n := \frac{\alpha}{\prod_{k=1}^n \lt(1-\frac{1}{p^{j_k}}\rt)} < 1
$$
and $j_{n+1} 
\ge j_n$ be the unique positive integer satisfying
$$
1-\frac{1}{p^{j_{n+1}-1}} \le \alpha_n < 1-\frac{1}{p^{j_{n+1}}}.
$$
By the inequality
$$
\alpha < \prod_{k=1}^{n} \lt(1-\frac{1}{p^{j_k}}\rt) \le \exp\lt(-\sum_{k=1}^n p^{-j_k}\rt),
$$
we have $\underset{n \to \infty}{\lim} j_n = \infty$. Now the inequality
$$
1-\frac{1}{p^{j_{n+1}-1}} \le \alpha_n < 1-\frac{1}{p^{j_{n+1}}}
$$
implies that $\underset{n \to \infty}{\lim} \alpha_n = 1$, and hence $\prod_{k=1}^{\infty} \lt( 1 - \frac{1}{p^{j_k}} \rt) = \alpha$.
\end{proof}

\begin{prop} \label{prop_converse not true finite field}
Let $r$ be a positive integer. Then there exists a sequence of support sets $(\Sg_n)_{n=1}^{\infty}$ such that if $\mathfrak{X}_n$ is a uniformly random matrix in $\M_n(\fp)$ supported on $\Sg_n$, we have
$$
\lim_{n \to \infty} \mathbb{P}(n - \mathrm{rank}(\mathfrak{X}_n) = r) = \cl(r),
$$
but for all poistive integers $m$
$$
\mathbb{P}(m-\mathrm{rank}(\mathfrak{X}_m) = 0) = 0.
$$    
\end{prop}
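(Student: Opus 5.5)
The plan is to force singularity with $r$ identically zero rows, and to choose the remaining rows so that the probability of full row rank tends exactly to $\cl(r)$. The fine-tuning of that limit will come from Lemma \ref{lem_product}. Throughout I describe supports row by row. This is harmless: $\mathrm{rank}(\mathfrak{X}_n)$ is transpose-invariant, so one can equally prescribe the column supports $\sg_{n,j}$ of the transpose.

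\textbf{Choice of $\alpha$.} Set
$$
\alpha := \frac{1}{p^{r^2}c_p(r)}.
$$
Since $c_p(r)\ge c_p(1)=1-p^{-1}$ and $p^{r^2}\ge p$, we get $p^{r^2}c_p(r)\ge p-1\ge 1$, so $\alpha\le 1$.
\begin{itemize}
\item If $\alpha<1$, Lemma \ref{lem_product} gives positive integers $j_1\le j_2\le\cdots$ with $\prod_{k\ge1}(1-p^{-j_k})=\alpha$.
\item If $\alpha=1$ (only when $p=2$, $r=1$), no extra rows are needed and I take $K_n=0$ below.
\end{itemize}
Choose $K_n\to\infty$ slowly enough that $r+K_n\le n$ and $j_1+\cdots+j_{K_n}\le n$ for all large $n$. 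For the finitely many small $n$ where this is impossible, let $\Sg_n$ be empty, so $\mathfrak{X}_n=0$.

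\textbf{The support pattern.} For large $n$, take pairwise disjoint column sets $C_1,\ldots,C_{K_n}\subseteq[n]$ with $|C_k|=j_k$. Then:
\begin{itemize}
\item rows $1,\ldots,r$ are identically zero;
\item row $r+k$, for $1\le k\le K_n$, is supported exactly on $C_k$;
\item the remaining $n-r-K_n$ rows have full support $[n]$.
\end{itemize}
Because of the zero rows, $\mathrm{rank}(\mathfrak{X}_m)\le m-r<m$ for every $m$, including the degenerate small cases. Hence $\bP(m-\mathrm{rank}(\mathfrak{X}_m)=0)=0$ for all $m$.

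\textbf{Computing the corank-$r$ probability.} The event $n-\mathrm{rank}(\mathfrak{X}_n)=r$ is exactly the event that the bottom $(n-r)\times n$ block has full row rank $n-r$. Expose the rows in order, as in the proof of Theorem \ref{thm: stair main theorem}.
\begin{enumerate}
\item The $K_n$ special rows have disjoint supports, so they are linearly independent if and only if each is nonzero. These events are independent, so this happens with probability $\prod_{k=1}^{K_n}(1-p^{-j_k})$.
\item Conditional on that, they span a $K_n$-dimensional subspace. Each subsequent uniformly random row of $\fp^n$ must avoid a span of dimension $i$, for $i=K_n,\ldots,n-r-1$. The conditional probability is
$$
\prod_{i=K_n}^{n-r-1}\bigl(1-p^{i-n}\bigr)=\prod_{i=r+1}^{n-K_n}\bigl(1-p^{-i}\bigr).
$$
\end{enumerate}
As $n\to\infty$ with $K_n\to\infty$ and $n-K_n\to\infty$, the first factor tends to $\alpha$ and the second to $c_p/c_p(r)$. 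The limit is therefore
$$
\frac{c_p}{p^{r^2}c_p(r)^2}=\cl(r),
$$
as required.

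\textbf{Main obstacle.} The main obstacle is matching the limit exactly rather than approximately. The naive choice of $r$ zero rows plus fully random rows yields $c_p/c_p(r)$, which overshoots $\cl(r)$ by the factor $p^{r^2}c_p(r)$. Lemma \ref{lem_product} is what lets one correct this with independent rows of prescribed support sizes. The only care needed is choosing $K_n$ slowly growing, so that the disjoint supports fit inside $[n]$ and the second product still converges to its infinite limit.
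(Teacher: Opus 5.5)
Your proof is correct and is essentially the paper's construction transposed: $r$ identically zero rows (the paper uses empty columns), $K_n$ rows with disjoint supports of sizes $j_k$ from Lemma~\ref{lem_product}, and full rows otherwise, with your $K_n=0$ case absorbing the paper's separate treatment of $(p,r)=(2,1)$. One slip: $c_p(r)\ge c_p(1)$ is backwards, since $c_p(r)$ decreases in $r$; the needed bound $\alpha\le 1$ still holds because $p^{r^2}c_p(r)=\prod_{i=1}^{r}p^{i-1}(p^{i}-1)\ge 1$, and your construction handles $\alpha=1$ whenever it occurs.
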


\begin{proof}
First suppose that $(p,r)=(2,1)$. Define $\sg_{n,1}=\varnothing$ and $\sg_{n,2}=\cdots = \sg_{n,n}=[n]$. Then 
$$
\bP(\mathrm{rank}(\mathfrak{X}_n)=n-1) = \prod_{i=2}^{n}(1-2^{-i})
$$
so
$$
\underset{n \to \infty}{\lim}\bP(\mathrm{rank}(\mathfrak{X}_n)=n-1) = 2c_2=\cl(1).
$$
On the other hand, $\bP(\mathrm{rank}(\mathfrak{X}_m)=m)=0$ for all $m$.

Hence we may assume that $(p,r)\ne (2,1)$. Then
$p^{r^2}c_p(r)>1$, so Lemma \ref{lem_product} implies that there exists a sequence of positive integers $(j_k)_{k=1}^{\infty}$ such that
$$
\prod_{k=1}^{\infty} \lt( 1 - \frac{1}{p^{j_k}} \rt) = \frac{1}{p^{r^2}c_p(r)}.
$$
Fix $n \ge 2r$ and let $t_n$ be the largest nonnegative integer such that $\sum_{k=1}^{t_n} j_k \le \frac{n}{2}$. Define 
\[
\sg_{n,i}  = \begin{cases}
       \varnothing  & \text{ for } 1\le i \le r,\\
     \{ x \in [n] \mid j_1+\cdots+j_{k-1} < x \le j_1+\cdots+j_k \}  &  \text{ for } i=r+k, \, 1 \le k \le t_n,\\
     [n] &  \text{ for } r+t_n < i \le n.
    \end{cases}
\]

Let $w_i \in \fp^n$ be the $i$-th column of $\mathfrak{X}_n$. Then 
\begin{align*}
\bP(\mathrm{rank}(\mathfrak{X}_n)=n-r)
&= \bP(w_{r+1}, w_{r+2}, \ldots, w_n \text{ are linearly independent}) \\
& = \bP(w_{r+1}, \ldots, w_{r+t_n} \ne 0) \\
& \times \bP(w_i \notin \langle w_{r+1}, \ldots, w_{i-1} \rangle ~\text{for all $r+t_n+1 \le i \le n$} \mid w_{r+1}, \ldots, w_{r+t_n} \ne 0) \\
& = \prod_{k=1}^{t_n} \lt( 1 - \frac{1}{p^{j_k}} \rt) \prod_{i=r+t_n+1}^{n} \lt(1 - \frac{1}{p^{n-(i-r-1)}} \rt) \\
& = \prod_{k=1}^{t_n} \lt( 1 - \frac{1}{p^{j_k}} \rt) \frac{c_p(n-t_n)}{c_p(r)}.
\end{align*}
Since $\underset{n \to \infty}{\lim} t_n = \infty$ and $t_n \le \sum_{k=1}^{t_n} j_k \le \frac{n}{2}$, we have
$$
\underset{n \to \infty}{\lim} \bP(\mathrm{rank}(\mathfrak{X}_n)=n-r)= \prod_{k=1}^{\infty} \lt( 1 - \frac{1}{p^{j_k}} \rt) \frac{c_p}{c_p(r)}
= \frac{1}{p^{r^2}}\frac{c_p}{c_p(r)^2} = \cl(r). 
$$
It is clear that $\bP(\mathrm{rank}(\mathfrak{X}_m)=m)=0$ for all $m$. 
\end{proof}

Let $X_n$ be the Haar-random matrix in $\M_n(\zp)$ supported on $\Sg_n$, where $\Sg_n$ is as in Proposition \ref{prop_converse not true finite field}. Then we obtain the following corollary.

\begin{cor} \label{cor_converse not true}
Let $r$ be a positive integer. Then there exists a sequence $(\Sg_n)_{n=1}^{\infty}$ such that if $X_n$ is the Haar-random matrix in $\M_n(\zp)$ supported on $\Sg_n$, then $\cok(X_n)$ converges to the Cohen--Lenstra distribution in rank $r$, but does not converge to the Cohen--Lenstra distribution in rank $0$.
\end{cor}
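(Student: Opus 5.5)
We take $\Sg_n$ exactly as constructed in Proposition \ref{prop_converse not true finite field} and let $X_n$ be the Haar-random matrix over $\zp$ supported on $\Sg_n$. The point is that the reduction modulo $p$ transfers the finite field statement directly to the $p$-adic one.

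First, we note that $\ol{X_n}$ is a uniformly random matrix in $\M_n(\fp)$ supported on $\Sg_n$. Indeed, each entry $(X_n)_{i,j}$ with $i \in \sg_{n,j}$ is Haar-random in $\zp$, so its reduction is uniform on $\fp$. The entries are independent, so their reductions are independent as well. The remaining entries are $0$, and they stay $0$ after reduction. Hence $\ol{X_n}$ has the same distribution as $\mathfrak{X}_n$.

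Next, we use $\cok(X_n)\otimes \fp \cong \cok(\ol{X_n})$. This gives
$$
\cok(X_n)/p\,\cok(X_n) \cong \fp^{\,n-\mathrm{rank}(\ol{X_n})}.
$$
Consequently, for every $m \ge 0$,
$$
\bP\lt(\cok(X_n)/p\,\cok(X_n) \cong \fp^m\rt) = \bP\lt(n-\mathrm{rank}(\mathfrak{X}_n) = m\rt).
$$

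We now apply Proposition \ref{prop_converse not true finite field}. Taking $m=r$, the probability above converges to $\cl(r)$, so $\cok(X_n)$ converges to CL in rank $r$. Taking $m=0$, the probability is $0$ for every $n$. Since $\cl(0)=c_p>0$, it does not converge to $\cl(0)$, so $\cok(X_n)$ does not converge to CL in rank $0$.

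No step is a genuine obstacle. The only thing to check carefully is the first step: the pushforward of Haar measure on $\zp$ under reduction is the uniform measure on $\fp$, and independence is preserved under reduction.
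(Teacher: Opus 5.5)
Your proposal is correct and follows the same route as the paper: take $\Sg_n$ from Proposition \ref{prop_converse not true finite field}, note that $\ol{X_n}$ has the distribution of $\mathfrak{X}_n$, and use $\cok(X_n)\otimes \fp \cong \cok(\ol{X_n})$ to turn the rank statements over $\fp$ into the rank-$r$ and rank-$0$ statements for $\cok(X_n)$. You spell out the reduction and independence details, which the paper leaves implicit.
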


\section{Nonsingularity probability growth under support expansion}
\label{Sec4}
For a positive integer $n$, let $\Sg_n = (\sg_{n,1}, \ldots, \sg_{n,n})$ and $\Sg_n' = (\sg_{n,1}', \ldots, \sg_{n,n}')$. We write $\Sg_n \le \Sg_n'$ if $\sg_{n,i} \subseteq \sg_{n,i}'$ for each $i \in [n]$. We define $|\Sg_n| = |\sg_{n,1}| + \cdots + |\sg_{n,n}|$, and similarly for $|\Sg_n'|$.

\begin{lem}
\label{lem: determinant nonzero inequality}
Suppose that $\Sg_n \le \Sg_n'$. Let $\ol{X_n}$ (resp. $\ol{X'_n}$) be a uniform random matrix in $\M_n(\fp)$ supported on $\Sg_n$ (resp. $\Sg_n'$). Then
$$
\bP(\det(\ol{X_n}) \ne 0 ) \le \bP(\det(\ol{X'_n}) \ne 0).
$$
\end{lem}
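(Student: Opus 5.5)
By induction on $|\Sg_n'| - |\Sg_n|$ it suffices to treat the case where $\Sg_n'$ is obtained from $\Sg_n$ by adding a single position. Say $\sg'_{n,j} = \sg_{n,j} \cup \{i\}$ for one column $j$ and one row $i \notin \sg_{n,j}$, with all other columns unchanged. In this case $\ol{X'_n}$ can be realized as $\ol{X_n} + \xi E_{i,j}$. Here $\xi$ is uniform in $\fp$ and independent of $\ol{X_n}$, and $E_{i,j}$ is the elementary matrix. This coupling is valid because the $(i,j)$ entry of $\ol{X_n}$ is identically $0$, so adding an independent uniform value produces exactly the uniform distribution supported on $\Sg_n'$.

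Next, condition on all entries of $\ol{X_n}$. Expanding the determinant along column $j$ (or using multilinearity in the $(i,j)$ entry) gives
$$
\det(\ol{X_n} + \xi E_{i,j}) = \det(\ol{X_n}) + \xi \, C_{i,j},
$$
where $C_{i,j}$ is the $(i,j)$-cofactor of $\ol{X_n}$. The cofactor does not involve the $(i,j)$ entry, so it is the same for both matrices. This affine function of $\xi$ is controlled by the following case analysis.
\begin{itemize}
\item If $C_{i,j} \ne 0$, the expression vanishes for exactly one value of $\xi$. The conditional probability of nonsingularity is therefore $1 - 1/p$, which is at least $\mathbb{1}[\det(\ol{X_n}) \ne 0]$ unless $\det(\ol{X_n}) \ne 0$.
\end{itemize}
That last exception is a problem: conditionally on a nonsingular $\ol{X_n}$ with $C_{i,j}\ne 0$, the pointwise comparison fails. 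A pointwise bound is therefore not enough, and I will average over part of the randomness instead.

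The fix is to condition only on the columns other than column $j$ and average over column $j$ as well. Fix those other columns, and suppose they are linearly independent, spanning a hyperplane $W$ (otherwise both determinants are $0$). Then:
\begin{itemize}
\item in $\ol{X_n}$, column $j$ is uniform on the subspace $U = \mathrm{span}\{e_k : k \in \sg_{n,j}\}$;
\item in $\ol{X'_n}$, column $j$ is uniform on $U' = U + \langle e_i \rangle$.
\end{itemize}
Nonsingularity means the column lies outside $W$. The probabilities of this are $1 - |U \cap W|/|U|$ and $1 - |U' \cap W|/|U'|$. Since $W$ is a hyperplane, for any subspace $S$ the ratio $|S \cap W|/|S|$ equals $1$ if $S \subseteq W$ and $1/p$ otherwise. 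Enlarging $U$ to $U'$ can only move this ratio from $1$ to $1/p$, never upward. So the conditional probability is monotone, and averaging over the other columns gives the inequality. This argument in fact works for an arbitrary enlargement of a single column in one step, so the induction can be run column by column.

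The main obstacle was realizing that the pointwise coupling fails and that one must condition on the complementary columns. The hyperplane dichotomy then resolves everything, and the remaining steps are routine.
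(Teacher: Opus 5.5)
Your final argument is correct and is essentially the paper's proof transposed. The paper permutes so the new entry is $(n,1)$, conditions on the first $n-1$ rows, and asks whether the independent last row pairs nontrivially with the dependency vector $a(t)$; you instead condition on the other $n-1$ columns and ask whether column $j$ avoids the hyperplane $W$ they span, and in both versions the key point is that a uniform vector on a coordinate subspace lies in a fixed hyperplane with probability $1$ or $1/p$, with enlargement only able to move this from $1$ to $1/p$ (the abandoned cofactor detour should just be deleted from a write-up).
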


\begin{proof}
It suffices to prove the lemma when $|\Sg_n'| = |\Sg_n| +1$. Furthermore, after permuting rows and columns, we may assume that $n \notin \sg_{n,1}$, $\sg'_{n,1} = \sg_{n,1} \cup \{n\}$ and $\sg'_{n,i} = \sg_{n,i}$ for all $2 \le i \le n$. In other words, the two support patterns differ only at the $(n,1)$-entry, which is fixed to be zero in $\ol{X_n}$ and is random in $\ol{X'_n}$.
Let $c_i$ (resp. $c_i')$ be the $i$-th column of $\ol{X_n}$ (resp. $\ol{X'_n}$),
$$
\pi : \fp^n \to \fp^{n-1}
$$ 
be the projection to the first $n-1$ coordinates and $\wt{c_i} = \pi(c_i) = \pi(c_i')$. Define
$$
T := \left\{t= (t_1, \ldots, t_n) \in (\fp^{n-1})^n \mid t_1,\ldots, t_n ~\text{span $\fp^{n-1}$}\right\}.
$$
Since there is a $1$-dimensional subspace of dependencies for each $t = (t_1,\ldots, t_n) \in T$, we may fix a choice of 
\[
a(t) := (a_1(t), \ldots, a_n(t)) \in \fp^n \setminus \{ (0, \ldots, 0) \}
\]
for which $\sum_{i=1}^n a_i(t)t_i = 0$. 
Let $(l_1, \ldots, l_n)$ (resp. $(l_1', \ldots, l_n')$) be the last row of the matrix $\ol{X_n}$ (resp. $\ol{X'_n}$). 
In particular, we may construct the matrices such that $l_i = l_i'$ for all $2 \le i \le n$, while $l_1=0$ and $l_1'$ is a uniform random element in $\fp$.

In this setting, we have
\begin{align*}
\bP(\det(\ol{X_n}) \ne 0 ) & = \bP(\text{$c_1, \ldots, c_n$ are linearly independent}) \\
& = \sum_{t = (t_1, \ldots, t_n) \in T} 
\bP((\wt{c_1}, \ldots, \wt{c_n}) = t) ~
\bP\lt( \sum_{i=1}^n a_i(t)l_i \ne 0 \rt)
\end{align*}
and
\begin{align*}
\bP(\det(\ol{X'_n}) \ne 0 ) & = \bP(\text{$c_1', \ldots, c_n'$ are linearly independent}) \\
& = \sum_{t = (t_1, \ldots, t_n) \in T} \bP\lt(\lt(\wt{c_1}, \ldots, \wt{c_n}\rt) = t\rt) ~ \bP\lt( \sum_{i=1}^n a_i(t)l_i' \ne 0\rt) \\
& = \sum_{t = (t_1, \ldots, t_n) \in T} \bP\lt(\lt(\wt{c_1}, \ldots, \wt{c_n}\rt) = t\rt) ~ \sum_{j=0}^{p-1} \frac{1}{p} \bP\lt( a_1(t)j+\sum_{i=2}^n a_i(t)l_i \ne 0 \rt). 
\end{align*}

Now it suffices to show that for each $t \in T$ and $1\le j \le p-1$,
$$
\bP\lt( \sum_{i=2}^n a_i(t)l_i \ne 0\rt) \le \bP\lt(a_1(t)j + \sum_{i=2}^n a_i(t)l_i \ne 0\rt),
$$
which is equivalent to
$$
\bP\lt(a_1(t)j + \sum_{i=2}^n a_i(t)l_i = 0\rt) \le \bP\lt( \sum_{i=2}^n a_i(t)l_i = 0\rt).
$$
Since $l_2,\ldots,l_n$ are either uniform on $\fp$ or identically zero, both sides are equal to $\frac{1}{p}$ whenever there exists an integer $2\le i\le n$ such that $l_i$ is uniform on $\fp$ and $a_i(t) \ne 0$. Otherwise, the sum $\sum_{i=2}^n a_i(t)l_i$ is identically zero so the right-hand side equals $1$. Hence the inequality holds.
\end{proof}

\begin{prop}
\label{prop: analogue of proposition 3.2 of KLY24}
Suppose that $\Sg_n \le \Sg_n'$ for all large enough $n$. Let $X_n$ (resp. $X'_n$) be the Haar-random matrix supported on $\Sg_n$ (resp. $\Sg_n'$). If $\underset{n \to \infty}{\lim} \bP(\det(\ol{X_n}) \ne 0 )=c_p$, then $\underset{n \to \infty}{\lim} \bP(\det(\ol{X'_n}) \ne 0 )=c_p$.
\end{prop}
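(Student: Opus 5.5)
The plan is to sandwich $\bP(\det(\ol{X'_n}) \ne 0)$ between $\bP(\det(\ol{X_n}) \ne 0)$ and a universal upper bound that tends to $c_p$.

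For the lower bound, the reduction $\ol{X_n}$ of a Haar-random matrix supported on $\Sg_n$ is a uniformly random matrix in $\M_n(\fp)$ supported on $\Sg_n$. This holds because reducing a Haar-random element of $\zp$ modulo $p$ gives a uniform element of $\fp$, and independence is preserved; the same is true for $\ol{X'_n}$. Hence Lemma \ref{lem: determinant nonzero inequality} applies for all large $n$ (those with $\Sg_n \le \Sg_n'$) and gives
$$
\bP(\det(\ol{X_n}) \ne 0) \le \bP(\det(\ol{X'_n}) \ne 0).
$$
Taking $\liminf$ and using the hypothesis yields $\liminf_{n\to\infty} \bP(\det(\ol{X'_n}) \ne 0) \ge c_p$.

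For the upper bound, we compare with full support. Every $\Sg_n'$ satisfies $\Sg_n' \le ([n],\ldots,[n])$, and for the full-support pattern the nonsingularity probability is exactly $c_p(n) = \prod_{i=1}^n (1-p^{-i})$, by the column-by-column count already used in \eqref{eq: determinant stair}. Applying Lemma \ref{lem: determinant nonzero inequality} once more gives $\bP(\det(\ol{X'_n}) \ne 0) \le c_p(n)$. Since $c_p(n) \to c_p$, we get $\limsup_{n\to\infty} \bP(\det(\ol{X'_n}) \ne 0) \le c_p$.

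Combining the two bounds gives the limit $c_p$. There is no real obstacle here: the substantive monotonicity input is Lemma \ref{lem: determinant nonzero inequality}. The only points to check are that the reductions modulo $p$ are exactly the uniform supported models that lemma requires, and that its hypothesis is needed only for large $n$, which is harmless when taking limits.
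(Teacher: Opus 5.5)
Your proposal is correct and matches the paper's proof: both sandwich $\bP(\det(\ol{X'_n}) \ne 0)$ between $\bP(\det(\ol{X_n}) \ne 0)$ and the full-support value $c_p(n)$ by applying Lemma \ref{lem: determinant nonzero inequality} twice, then let $n \to \infty$. Your extra checks — that reduction modulo $p$ gives the uniform supported model the lemma requires, and that the inclusion is only needed for large $n$ — are the right details, and the paper uses them implicitly.
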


\begin{proof}
Let $X_n''$ be the Haar-random matrix in $\M_n(\zp)$ (equivalently, the Haar-random matrix supported on $([n], \ldots, [n])$). 
By Lemma \ref{lem: determinant nonzero inequality}, for all sufficiently large $n$ we have
$$
\bP(\det(\ol{X_n}) \ne 0 ) \le \bP(\det(\ol{X'_n}) \ne 0 ) \le \bP(\det(\ol{X_n''}) \ne 0) = c_p(n).
$$
Now the proposition follows immediately. 
\end{proof}

\begin{rem}
Let $X_n$ be the Haar-random matrix in $\M_n(\zp)$ supported on $\Sg_n$. Proposition \ref{prop: analogue of proposition 3.2 of KLY24} states that if $\cok(X_n)$ converges to CL in rank $0$, then the same holds for $\cok(X_n')$ whenever $X_n'$ has larger support than $X_n$. 
In all examples in Section \ref{Sec2} where Conjecture \ref{conj: main conjecture} is verified, we in fact proved a stronger statement: namely, that convergence of $\cok(X_n)$ to CL in rank $0$ already implies $\underset{n \to \infty}{\lim}\bE(\#\Sur(\cok(X_n), G)) = 1$ (and hence the convergence to CL).
Moreover, Lemma \ref{lem: Prop 3.2 in KLY24} states that if the moments $\bE(\#\Sur(\cok(X_n), G))$ converge to $1$, then the same holds for $\cok(X_n')$ whenever $X_n'$ has larger support than $X_n$. This analogy can be regarded as supporting evidence for Conjecture \ref{conj: main conjecture}.
\end{rem}

\section*{Acknowledgments}
\hspace{3mm} 
Hyungmin Jang was supported by the National Research Foundation of Korea (NRF) grant funded by the Korea government (MSIT) (No. RS-2025-00563144). 
Nathan Kaplan was supported by NSF Grant DMS 2154223. 
Jungin Lee was supported by the National Research Foundation of Korea (NRF) grant funded by the Korea government (MSIT) (No. RS-2024-00334558 and No. RS-2025-02262988). 
Myungjun Yu was supported by the National Research Foundation of Korea (NRF) grant funded by the Korea government (MSIT) (No. RS-2025-00563144 and No. RS-2025-23525445) and by Yonsei University Research Fund (2024-22-0146).

\end{document}